\def\today{\ifcase\month\or
  January\or February\or March\or April\or May\or June\or
  July\or August\or September\or October\or November\or December\fi
  \space\number\day, \number\year}
\DeclareMathOperator{\supp}{\mathrm{supp}}
 \newtheorem{theorem}{Theorem}
 \newtheorem{lemma}[theorem]{Lemma}
 \newtheorem{proposition}[theorem]{Proposition}
 \newtheorem{corollary}[theorem]{Corollary}
 \theoremstyle{definition}
 \theoremstyle{remark}
 \newcommand{\R}{\mathbb{R}}
 \newcommand{\N}{\mathbb{N}}
 \newcommand{\hh}{\tfrac12}
 \newcommand{\ds}{\text{\rm d}s}
 \newcommand{\dt}{\text{\rm d}t}
  \renewcommand{\d}{\text{\rm d}}
 \newcommand{\du}{\text{\rm d}u}
 \newcommand{\dx}{\text{\rm d}x}
 \newcommand{\dy}{\text{\rm d}y}
\newcommand{\im}{{\rm Im}\,}
\newcommand{\re}{{\rm Re}\,}
\begin{document}
\title[Extreme values for $S_n(\sigma,t)$ near the critical line]{Extreme values for $S_n(\sigma,t)$ near the critical line}
\author[Chirre]{Andr\'{e}s Chirre}
\subjclass[2010]{11M06, 11M26, 11N37}
\keywords{Riemann zeta function, Riemann hypothesis, argument}
\address{IMPA - Instituto Nacional de Matem\'{a}tica Pura e Aplicada - Estrada Dona Castorina, 110, Rio de Janeiro, RJ, Brazil 22460-320}
\email{achirre@impa.br}

\allowdisplaybreaks
\numberwithin{equation}{section}

\maketitle

\begin{abstract}  Let $S(\sigma,t)=\frac{1}{\pi}\arg\zeta(\sigma+it)$ be the argument of the Riemann zeta function at the point  $\sigma+it$ of the critical strip. For $n\geq 1$ and $t>0$ we define
\begin{equation*}
S_{n}(\sigma,t) = \int_0^t S_{n-1}(\sigma,\tau) \,\d\tau\, + \delta_{n,\sigma\,},
\end{equation*}
where $\delta_{n,\sigma}$ is a specific constant depending on $\sigma$ and $n$. Let $0\leq \beta<1$ be a fixed real number. Assuming the Riemann hypothesis, we show lower bounds for the maximum of the function $S_n(\sigma,t)$ on the interval $T^\beta\leq t \leq T$ and near to the critical line, when $n\equiv 1\mod 4$. Similar estimates are obtained for $|S_n(\sigma,t)|$ when $n\not\equiv 1\mod 4$. This extends the results of Bondarenko and Seip \cite{BS} for a region near the critical line. In particular we obtain some omega results for these functions on the critical line. 
\end{abstract}

\section{Introduction}

In this paper, following similar ideas from Bondarenko and Seip \cite{BS}, we obtain new estimates for extreme values of the argument of the Riemann zeta function and its antiderivatives near the critical line assuming the Riemann hypothesis. Our main tools are convolution formulas for the functions $S_n(\sigma,t)$ and the version of the resonance method of Bondarenko and Seip given in \cite{BS}.

\smallskip

Let us begin by defining the main objects of our study and some results of them.

\subsection{Background} Let $\zeta(s)$ denote the Riemann zeta function. For $\hh\leq\sigma \leq1$ and $t>0$ we define
$$S(\sigma,t) = \tfrac{1}{\pi} \arg \zeta \big(\sigma + it \big),$$
where the argument is obtained by a continuous variation along straight line segments joining the points $2$, $2+i t$ and $\sigma + it$, assuming that this path has no zeros of $\zeta$, with the convention that $\arg \zeta(2) = 0$. If this path has zeros of $\zeta$ (including the endpoint $\sigma + it$) we set 
$$S(\sigma,t)  = \tfrac{1}{2}\, \lim_{\varepsilon \to 0} \left\{ S(\sigma,t + \varepsilon) + S(\sigma,t - \varepsilon)\right\}. $$
Useful information on the qualitative and quantitative behavior of $S(\sigma,t)$ is encoded in its antiderivatives. Setting $S_{0}(\sigma,t):=S(\sigma,t)$, for $n\geq 1$ we define, inductively, the functions
\begin{equation*}
S_{n}(\sigma,t) = \int_0^t S_{n-1}(\sigma,\tau) \,\d\tau\, + \delta_{n,\sigma\,},
\end{equation*}
where $\delta_{n,\sigma}$ is a specific constant depending on $\sigma$ and $n$. These are given by 
$$\delta_{2k-1,\sigma} =\frac{ (-1)^{k-1}}{\pi} \int_{\sigma}^{\infty} \int_{u_{2k-1}}^{\infty} \ldots \int_{u_{3}}^{\infty} \int_{u_{2}}^{\infty} \log |\zeta(u_1)|\, \du_1\,\du_2\,\ldots \,\du_{2k-1} $$
for $n = 2k-1$, with $k \geq 1$, and
$$\delta_{2k,\sigma} = (-1)^{k-1} \int_{\sigma}^{1} \int_{u_{2k}}^{1} \ldots \int_{u_{3}}^{1} \int_{u_{2}}^{1} \du_1\,\du_2\,\ldots \,\du_{2k} = \frac{(-1)^{k-1}(1-\sigma)^{2k}}{(2k)!}  $$ 
for $n = 2k$, with $k \geq 1$. 

\smallskip

Let $n\geq0$ be an integer and $\hh\leq\sigma\leq1$ be a fixed real number. We extend the functions $t\mapsto S_n(\sigma,t)$ to $\R$ in such a way that $S_n(\sigma,t)$ is an odd function when $n$ is even or is an even function when $n$ is odd. 

\smallskip
  
\subsection{Behavior on the critical line} When $\sigma=\hh$, we use the classical notation $S_n(\frac{1}{2},t)=S_n(t)$ and $S_0(t)=S(t)$. In 1924, J. E. Littlewood \cite[Theorem 11]{L} established, under the Riemann hypothesis (RH), the bound\footnote{The notation $f = O(g)$ (or $f \ll g$) means $|f(t)| \leq C \,g(t)$ for some constant $C>0$ and $t$ sufficiently large. In the subscript we indicate the parameters in which such constant $C$ may depend on.}
\begin{align} \label{7_5_12:10am}
S_n(t)=O_n\bigg(\dfrac{\log t}{(\log\log t)^{n+1}}\bigg),
\end{align} 
for $n\geq 0$. The order of magnitude of \eqref{7_5_12:10am} has never been improved and the efforts have been concentrated on optimizing the value of the implicit constants. The best known result for $n=0$ and $n=1$ is due to Carneiro, Chandee and Milinovich \cite{CCM1} and for $n\geq 2$ is due to Carneiro and Chirre \cite{CChi}. 

\medskip

On the other hand, for $n=0$ we have the following omega results\footnote{The notation $f =\Omega_{+}(g)$ means $f(t)> C \,g(t)$ for some constant $C>0$ and for some arbitrarily large values of $t$. The notation $f =\Omega_{-}(g)$ means $f(t)< -C \,g(t)$ for some constant $C>0$ and for some arbitrarily large values of $t$. The notation $f =\Omega_{\pm}(g)$ means that $f =\Omega_{+}(g)$ and $f =\Omega_{-}(g)$.}
\begin{align} \label{7_5_12:11am}
S(t)=\Omega_{\pm}\Bigg(\dfrac{(\log t)^{\frac{1}{2}}}{(\log\log t)^{\frac{1}{2}}}\Bigg),
\end{align}
established by Montgomery \cite[Theorem 2]{M}, under RH. It is likely that the estimate \eqref{7_5_12:11am} is closer to the behavior of the function $S(t)$ than the estimate \eqref{7_5_12:10am}. In fact, a heuristic argument by Farmer, Gonek and Hughes \cite{FGH} suggests that $S(t)$ grows as $(\log t \log\log t)^{\frac{1}{2}}$. Similarly, for the function $S_1(t)$ Tsang \cite[Theorem 5]{T} established, under RH, that
\begin{align*} 
S_1(t)=\Omega_{\pm}\Bigg(\dfrac{(\log t)^{\frac{1}{2}}}{(\log\log t)^{\frac{3}{2}}}\Bigg).
\end{align*}  
For the case $n\geq 2$, there are no known omega results for $S_n(t)$. 

\medskip

Recently, Bondarenko and Seip used their version of the resonance method with a certain convolution formula for $\zeta(s)$ to produce large values of the Riemann zeta function on the critical line \cite{BS}. Besides, using a convolution formula for $\log\zeta(s)$, they obtained similar results for the functions $S(t)$ and $S_1(t)$. They showed the following theorem.

\begin{theorem}[cf. Bondarenko and Seip \cite{BS}] \label{13_5_3:26am}
Assume the Riemann hypothesis. Let $0\leq\beta<1$ be a fixed real number. Then there exist two positive constants $c_0$ and $c_1$ such that, whenever $T$ is large enough,
$$
\max_{T^\beta\leq t\leq T}|S(t)| \geq c_0\dfrac{(\log T)^{\frac{1}{2}}(\log\log\log T)^{\frac{1}{2}}}{(\log\log T)^{\frac{1}{2}}}
$$
and
$$
\max_{T^\beta\leq t\leq T}S_1(t) \geq c_1\dfrac{(\log T)^{\frac{1}{2}}(\log\log\log T)^{\frac{1}{2}}}{(\log\log T)^{\frac{3}{2}}}.  
$$
\end{theorem}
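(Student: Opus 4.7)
My plan is to combine the resonance method of Bondarenko and Seip with a convolution identity relating $S(t)$ and $S_1(t)$ on the critical line to values of $S(\sigma_0,\cdot)$ and $S_1(\sigma_0,\cdot)$ for a $\sigma_0 = \meio + c/\log T$ slightly to the right of $\meio$. Under RH, one can shift the contour of $\log\zeta(s+it)$ from $\re s=0$ to $\re s=c/\log T$ to obtain an identity of the form
\[
S(t) = \int_\R S(\sigma_0, t+u)\,\phi(u)\,\du + O\!\left(\tfrac{1}{\log T}\right),
\]
where $\phi$ is a real, fixed, even function whose Fourier transform has compact support. Integrating once in $t$ gives a similar identity for $S_1(t)$, up to a bounded additive constant. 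Since $S$ is odd in $t$ while $S_1$ is even, the integrand will carry a definite sign for $S_1$ (yielding the one-sided bound in the statement) but only a sign-indefinite contribution for $S$ (so we recover only $|S|$).

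The resonator follows \cite{BS}: set $N=T^\gamma$ with $\gamma<(1-\beta)/2$ and $R(t)=\sum_{m\le N}r(m)\,m^{-it}$, with non-negative multiplicative coefficients $r(m)$ supported on integers whose prime factors lie in a window near $\log T$, tuned to maximise a suitable GCD-type quadratic form. I also choose a smooth non-negative window $\Phi_T$ with $\supp\Phi_T\subset[T^\beta/2,2T]$, $\Phi_T\ge 1$ on $[T^\beta,T]$, and $\widehat{\Phi_T}$ concentrated on a scale $\ll 1$. Then
\[
\max_{T^\beta\le t\le T}|S(t)| \;\ge\; \frac{\bigl|\int_\R S(t)\,|R(t)|^2\,\Phi_T(t)\,\dt\bigr|}{\int_\R |R(t)|^2\,\Phi_T(t)\,\dt} - o(1),
\]
with the analogous one-sided inequality for $S_1(t)$.

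The core computation substitutes the convolution identity, expands $|R(t)|^2=\sum_{m,m'\le N}r(m)r(m')(m/m')^{it}$, and uses the Dirichlet series
\[
S(\sigma_0,\tau) = -\tfrac{1}{\pi}\sum_p\sum_{k\ge 1}\dfrac{\sin(\tau k\log p)}{k\,p^{k\sigma_0}},\quad S_1(\sigma_0,\tau)=\tfrac{1}{\pi}\sum_p\sum_{k\ge 1}\dfrac{\cos(\tau k\log p)}{(k\log p)\,k\,p^{k\sigma_0}} + C_0.
\]
The $t$- and $u$-integrals reduce to evaluations of $\widehat\phi$ and $\widehat{\Phi_T}$ at frequencies built from $k\log p$ and $\log(m/m')$; for $m,m'\le T^\gamma$ with $2\gamma<1-\beta$, $\widehat{\Phi_T}(\log(m/m')/2\pi)$ behaves essentially as $\int_\R\Phi_T$ (a positive constant), so only the diagonal $m'=mp^k$ contributes in main order. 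With the Bondarenko-Seip choice of $r(m)$, the ratio of numerator to denominator is bounded below by $(\log T)^{1/2}(\log\log\log T)^{1/2}/(\log\log T)^{1/2}$, giving the first bound. For $S_1$ the same manipulation with the cosine series introduces an extra factor $1/(k\log p)$ inside the sum; re-optimising the prime range used in $r(m)$ costs exactly one factor of $\log\log T$, producing the claimed $(\log\log T)^{3/2}$ in the denominator.

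The main obstacle is the tight coupling between the short-interval cutoff and the admissible resonator length: requiring $\widehat{\Phi_T}(\log(m/m')/2\pi)$ to behave as a positive constant over $\supp(r)\times\supp(r)$ forces $N^2\le T^{1-\beta-\varepsilon}$, i.e.\ $\gamma<(1-\beta)/2$. This narrows the Bondarenko-Seip optimisation range; since $\beta<1$ one can still achieve the claimed exponents, but the constants $c_0,c_1$ degrade as $\beta\to 1$. A secondary point is verifying that the $O(1/\log T)$ error in the convolution formula and the tail contribution of $\Phi_T$ outside $[T^\beta,T]$ are both negligible against the main term of order $(\log T)^{1/2}/(\log\log T)^{1/2}$; both follow from routine mean-square bounds on $|R(t)|^2\Phi_T(t)$.
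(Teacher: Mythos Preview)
Your overall architecture (resonance method plus a convolution formula plus Dirichlet-series expansion) is right, but two load-bearing pieces are misidentified. The identity $S(t)=\int_\R S(\sigma_0,t+u)\phi(u)\,\du+O(1/\log T)$ with $\sigma_0=\tfrac12+c/\log T$ and band-limited $\phi$ is not what Bondarenko--Seip use and is not obviously true: the harmonic-function relation under RH runs the other direction (convolving $S(\tfrac12,\cdot)$ with a Poisson kernel of width $\sigma_0-\tfrac12$ gives $S(\sigma_0,\cdot)$, not the reverse), and $S(t)$ has jump discontinuities at ordinates of zeros, so no smooth convolution recovers it pointwise to $O(1/\log T)$. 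The paper (following BS and Selberg--Tsang) never reconstructs $S(t)$; it works directly with the convolution $\int S_n(\tfrac12,t+u)K_n(u)\,\du$, which at $\sigma=\tfrac12$ already equals $\pi^{-1}\,\mathrm{Im}\{i^n\sum_m\Lambda(m)m^{-1/2-it}(\log m)^{-n-1}\widehat{K_n}(\log m/2\pi)\}+O(1)$ with no shift to $\sigma_0$. Here $K_n$ is a Gaussian of width $1/\log_2 T$ (for $n=1$) or $x$ times such a Gaussian (for $n=0$). The maximum of $S_n$ is then bounded below by the ratio of $\int\big(\int S_n(t+u)K_n(u)\,\du\big)|R(t)|^2\Phi(t/T)\,\dt$ to $\|K_n\|_1\int|R|^2\Phi(t/T)\,\dt$, with $\Phi(x)=e^{-x^2/2}$; the restriction to $T^\beta\le t\le T$ is obtained by splitting the outer $t$-integral into the ranges $|t|<T^\beta$, $T^\beta\le|t|\le T\log T$, $|t|>T\log T$, not via a compactly supported weight.

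Second, writing $R(t)=\sum_{m\le N}r(m)m^{-it}$ with the constraint $N^2\le T^{1-\beta-\varepsilon}$ describes the Soundararajan resonator, which yields only $(\log T)^{1/2}/(\log_2 T)^{1/2}$ and misses the factor $(\log_3 T)^{1/2}$ in the statement. The BS resonator is supported on a set $\mathcal M'$ of \emph{cardinality} at most $N=[T^{(1-\beta)/2}]$, but its elements are products of primes near $\log N\log_2 N$ and can be far larger than $N$. The denominator $\int|R|^2\Phi(t/T)\,\dt$ is controlled not by $\widehat\Phi$ being ``essentially a positive constant'' on the frequencies $\log(m/m')$ but by the opposite mechanism: $\mathcal M'$ is built by selecting at most one integer from each interval $[(1+T^{-1})^j,(1+T^{-1})^{j+1})$, so distinct $m\ne m'$ satisfy $|\log(m/m')|\gg 1/T$ and their Fourier contributions decay. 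The gain $(\log_3 T)^{1/2}$ then comes from a separate combinatorial lower bound on $\sum_{n\in\mathcal M}f(n)^2\sum_{p\mid n}(f(p)\sqrt p)^{-1}$, not from positivity of $\widehat\Phi$.
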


\smallskip

Theorem \ref{13_5_3:26am} implies the following omega result\footnote{The notation $f =\Omega(g)$ means that $\lim_{t\to\infty} f(t)/g(t) \neq 0$.} for $S(t)$:
$$
S(t)=\Omega\Bigg(\dfrac{(\log t)^{\frac{1}{2}}(\log\log\log t)^{\frac{1}{2}}}{(\log\log t)^{\frac{1}{2}}}\Bigg).
$$
This result can be compared with the $\Omega_{\pm}$ results of Montgomery. For $S_1(t)$, Theorem \ref{13_5_3:26am} improved the $\Omega_{+}$ result given by Tsang by a factor $(\log\log\log t)^{\frac{1}{2}}$. 

\smallskip

\subsection{Behavior in the critical strip} Recently Carneiro, Chirre and Milinovich \cite[Theorem 2]{CChiM} showed new estimates for $S_n(\sigma,t)$ similar to \eqref{7_5_12:10am}. In particular, for a fixed number $\hh<\sigma<1$, under RH, we have that
$$
S_n(\sigma,t)=O_{n,\sigma}\bigg(\dfrac{{(\log t)}^{2-2\sigma}}{(\log\log t)^{n+1}}\bigg),
$$
for $n\geq 0$. On the other hand, under RH, Tsang \cite[Theorem 2 and p. 382]{T} states the following lower bound
\begin{align} \label{13_5_11:32pm} 
\sup_{t\in[T,2T]}\pm S(\sigma,t)\geq c\,\dfrac{(\log T)^{\frac{1}{2}}}{(\log\log T)^{\frac{1}{2}}},
\end{align}
for $\hh\leq \sigma \leq \hh + \frac{1}{\log \log T}$, $T$ sufficiently large and some constant $C>0$. This result shows extreme values for $S(\sigma,t)$ near the critical line. For the critical strip, a result of Montgomery \cite{M} states that, for a fixed $\hh<\sigma<1$, we have
$$
S(\sigma,t)=\Omega_{\pm}\bigg((\sigma-\hh)^2\dfrac{(\log t)^{1-\sigma}}{(\log \log t)^{\sigma}}\bigg).
$$

\medskip

 The main result of this paper is to show lower bounds for $S_n(\sigma,t)$ near the critical line, similar to \eqref{13_5_11:32pm}.
\begin{theorem} \label{23_4_8:21pm}
Assume the Riemann hypothesis. Let $0\leq \beta<1$ be a fixed number. Let $\sigma>0$ be a real number and $T>0$ sufficiently large in the range
\begin{align*} 
\dfrac{1}{2} \leq \sigma\leq \dfrac{1}{2}+\dfrac{1}{\log\log T}.
\end{align*}
Then there exists a sequence $\{c_n\}_{n\geq 0}$ of positive real numbers with the following property.
\begin{enumerate}
\item If $n\equiv 1\mod 4$:
$$
\max_{T^\beta\leq t\leq T}S_n(\sigma,t) \geq c_n\dfrac{(\log T)^{1-\sigma}(\log\log\log T)^{\sigma}}{(\log\log T)^{\sigma+n}}. 
$$
\item In the other cases:
$$
\max_{T^\beta\leq t\leq T}|S_n(\sigma,t)| \geq c_n\dfrac{(\log T)^{1-\sigma}(\log\log\log T)^{\sigma}}{(\log\log T)^{\sigma+n}}.  
$$
\end{enumerate} 
\end{theorem}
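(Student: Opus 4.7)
The plan is to adapt the resonance method of Bondarenko and Seip (Theorem \ref{13_5_3:26am}) to the functions $S_n(\sigma,t)$ by first expressing each $S_n(\sigma,t)$ as a convolution, on a scale comparable to $\sigma-\tfrac12$, of the boundary values $\log|\zeta(\tfrac12+i\tau)|$ (or of $S(\tfrac12,\tau)$) against an explicit kernel. This combines two tools already in the literature: Poisson-type identities that relate $\log\zeta$ inside the critical strip to its boundary data on the critical line, as exploited in \cite{CChiM}, and the resonance machinery of \cite{BS}.

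The first step is to derive, for each $n\geq 0$ and $\tfrac12<\sigma\leq \tfrac12+1/\log\log T$, an identity of the schematic form
\[
S_n(\sigma,t)\;=\;\int_{-\infty}^{\infty} L(t-u)\, K_n^{(\sigma)}(u)\, du\;+\;E_n(\sigma,t),
\]
where $L$ equals $\log|\zeta(\tfrac12+i\,\cdot)|$ when $n$ is odd and $\pi\,S(\tfrac12,\,\cdot)$ when $n$ is even, $E_n(\sigma,t)$ collects the contribution of the analytic (Gamma-factor) part of the Riemann--von Mangoldt explicit formula, and $K_n^{(\sigma)}$ arises by applying the Poisson kernel for the half-plane $\{\mathrm{Re}\,s>\tfrac12\}$ to $\log\zeta(\sigma+it)$ and then integrating $n$ times in $t$. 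The features of $K_n^{(\sigma)}$ to be exploited are that its $L^1$-mass is concentrated on a window of length $\sim\sigma-\tfrac12$, its Fourier transform decays exponentially, and its sign pattern on each half-line is dictated by the residue of $n$ modulo $4$.

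Following \cite{BS}, I would then introduce a resonator $R(t)=\sum_{\ell\in\mathcal{M}}r(\ell)\,\ell^{-it}$ built from the Bondarenko--Seip coefficients, a smooth test function $\Phi$ supported in $[\beta,1]$, and study the weighted average
\[
\mathcal{I}_n(T,\sigma)\;=\;\frac{\int_{\mathbb{R}}|R(t)|^2\,\Phi(t/T)\,S_n(\sigma,t)\,dt}{\int_{\mathbb{R}}|R(t)|^2\,\Phi(t/T)\,dt}.
\]
Inserting the convolution identity and swapping integrations reduces the numerator to $\int_{\mathbb{R}}\bigl(|R|^2\Phi(\cdot/T)\ast K_n^{(\sigma)}\bigr)(\tau)\,L(\tau)\,d\tau$, which is precisely the type of functional on the critical line that \cite{BS} estimate from below when proving Theorem \ref{13_5_3:26am}. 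Computing the Fourier transform of $K_n^{(\sigma)}$ and choosing the resonator parameters in a way adapted to the scale $\sigma-\tfrac12$ then yields the desired lower bound on $\mathcal{I}_n(T,\sigma)$, which passes to $\max_{T^\beta\leq t\leq T} S_n(\sigma,t)$ because the denominator equals $\sum_\ell |r(\ell)|^2$ up to admissible error.

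The main obstacle is sign control, which is exactly what dictates the mod-$4$ dichotomy in the statement. For $n\equiv 1\mod 4$, the kernel $K_n^{(\sigma)}$ turns out to have a definite sign that couples positively with the dominant part of $L$ in the large-value regime, yielding an honest signed lower bound. In the other residues the kernel or the effective coupling changes sign and one only recovers a lower bound for $|S_n|$. The quantitative shape of the exponents is then forced by a simple accounting: the Bondarenko--Seip input on the critical line contributes $(\log T)^{1/2}(\log\log\log T)^{1/2}/(\log\log T)^{1/2}$, the Poisson convolution on the scale $\sigma-\tfrac12\sim 1/\log\log T$ upgrades $(\log T)^{1/2}$ to $(\log T)^{1-\sigma}$ and $(\log\log T)^{-1/2}$ to $(\log\log T)^{-\sigma}$, and the $n$ antiderivatives cost one further factor of $(\log\log T)^{-1}$ each. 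It remains to check that $E_n(\sigma,t)$ is negligible at this scale, which is a routine estimate once the explicit formula is in hand.
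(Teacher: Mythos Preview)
Your plan differs from the paper's in a way that introduces a real obstacle. The convolution identity you propose—$S_n(\sigma,t)=\int L(t-u)K_n^{(\sigma)}(u)\,du+E_n$ with $L$ equal to $\log|\zeta(\tfrac12+i\cdot)|$ or $\pi S(\tfrac12,\cdot)$—does not arise by ``applying the Poisson kernel and then integrating $n$ times in $t$''. Iterated $t$-antiderivatives of the Poisson kernel $P_a(x)=a/\pi(a^2+x^2)$ are not in $L^1(\R)$ (already one antiderivative behaves like $\arctan$), so the convolution would not even be defined. A valid kernel does exist, but it comes from integrating in the \emph{real} direction: from the identity $S_n(\sigma,t)=\pi^{-1}\,\mathrm{Im}\bigl\{i^n(n-1)!^{-1}\int_\sigma^{2}(u-\sigma)^{n-1}\log\zeta(u+it)\,du\bigr\}+O_n(1)$ one gets $K_n^{(\sigma)}(x)=\frac{1}{\pi(n-1)!}\int_\sigma^{2}(u-\sigma)^{n-1}P_{u-1/2}(x)\,du$. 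This kernel is in $L^1$ but is \emph{not} concentrated on a window of length $\sim\sigma-\tfrac12$; its mass is spread over scales up to $O(1)$, so your accounting of how the Poisson scale ``upgrades'' the exponents is not the right picture. Moreover, having expressed $S_n(\sigma,t)$ in terms of $L$ on the critical line, you still cannot feed $L$ directly into the resonance method: one needs a further Selberg--Tsang convolution to turn $L$ into a Dirichlet polynomial, so you end up with two nested kernels rather than one.

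The paper bypasses the critical-line detour entirely. It combines the $u$-integral identity above with the Selberg--Tsang formula at each $u\in[\sigma,2]$ to obtain directly
\[
\int_{-\infty}^{\infty} S_n(\sigma,t+s)\,K(s)\,ds=\frac{1}{\pi}\,\mathrm{Im}\Bigl\{i^n\sum_{m\ge 2}\frac{\Lambda(m)}{m^{\sigma+it}(\log m)^{n+1}}\,\widehat K\Bigl(\frac{\log m}{2\pi}\Bigr)\Bigr\}+O_n\bigl(V_{1/2}(t)+\|K\|_1\bigr),
\]
a Dirichlet series already at abscissa $\sigma$. The resonator is then rebuilt with $\sigma$-dependent weights $f(p)\propto p^{-\sigma}$ (not the fixed critical-line weights), and the extra factor $(\log m)^{-n}$ in the Dirichlet coefficients is what yields the additional $(\log\log T)^{-n}$ in the conclusion, since the relevant primes satisfy $\log p\asymp\log_2 T$. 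The mod-$4$ dichotomy comes not from sign properties of a Poisson-type kernel but from the choice of $K$: for odd $n$ one takes $K_n(z)=(-1)^{(n-1)/2}\log_2 T\cdot\Phi(2\pi\log_2 T\,z)$ (a signed Gaussian), and only when $n\equiv 1\pmod 4$ is $K_n\ge 0$, which is exactly what permits pulling out $\max S_n$ rather than $\max|S_n|$ from the resonance integral.
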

 
\medskip

Note that when $\sigma=\hh$ and $n=0$ or $1$, we recover Theorem \ref{13_5_3:26am}. Moreover, we obtain the new omega results on the critical line. 

\smallskip

\begin{corollary} Assume the Riemann hypothesis. Then 
\begin{enumerate}
\item If $n\equiv 1\mod 4$:
$$
S_n(t) = \Omega_{+}\Bigg(\dfrac{(\log t\log\log\log t)^{\frac{1}{2}}}{(\log\log t)^{n+\frac{1}{2}}}\Bigg).
$$
\item In the other cases:
$$
S_n(t) = \Omega\Bigg(\dfrac{(\log t\log\log\log t)^{\frac{1}{2}}}{(\log\log t)^{n+\frac{1}{2}}}\Bigg).
$$
\end{enumerate} 
\end{corollary}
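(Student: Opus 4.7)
The plan is to deduce the corollary directly from Theorem~\ref{23_4_8:21pm} by specializing to $\sigma = 1/2$, which lies at the left endpoint of the admissible range. Since $S_n(1/2,t) = S_n(t)$ by the convention introduced in Section~1.2, the bound in Theorem~\ref{23_4_8:21pm} reduces, at $\sigma = 1/2$, to
\[
\max_{T^\beta \leq t \leq T} S_n(t) \;\geq\; c_n \,\frac{(\log T)^{1/2}(\log\log\log T)^{1/2}}{(\log\log T)^{n+1/2}}
\]
when $n \equiv 1 \pmod 4$, and to the analogous bound for $|S_n(t)|$ in every other residue class modulo $4$. This already matches the target expression in the corollary, except that the right-hand side is written in the variable $T$ rather than in the point $t_T \in [T^\beta, T]$ realising the maximum.

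The second step is to rewrite this lower bound in terms of $t_T$. Since $\beta$ is fixed and $T^\beta \leq t_T \leq T$, one has $\log t_T \asymp_\beta \log T$, and consequently $\log\log t_T = \log\log T + O(1)$ and $\log\log\log t_T = \log\log\log T + o(1)$. Each of the three iterated-logarithm factors in the bound therefore agrees with its counterpart in $t_T$ up to multiplicative constants depending only on $\beta$, and absorbing these into $c_n$ yields, for $n \equiv 1 \pmod 4$, an unbounded sequence $t_T \to \infty$ along which
\[
S_n(t_T) \;\geq\; c'_n \,\frac{(\log t_T \, \log\log\log t_T)^{1/2}}{(\log\log t_T)^{n+1/2}},
\]
which is the desired $\Omega_{+}$ conclusion as defined in the footnote preceding \eqref{7_5_12:11am}. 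Applying the same reasoning to part~(2) of Theorem~\ref{23_4_8:21pm} produces, along a similar sequence, the inequality $|S_n(t_T)| \geq c'_n\, g(t_T)$ with $g$ the target function, which is precisely the $\Omega$ statement in the remaining residue classes.

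The corollary is therefore essentially a direct specialization of the main theorem, and no genuine obstacle arises: all the substantive input---the convolution formulas for $S_n(\sigma,t)$ and the Bondarenko--Seip version of the resonance method---is already contained in the proof of Theorem~\ref{23_4_8:21pm}. The only step requiring any care is the routine comparison of iterated logarithms at $T$ with those at a point in $[T^\beta, T]$, and this only affects the implicit constant.
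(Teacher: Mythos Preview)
Your proposal is correct and matches the paper's intent: the corollary is stated immediately after Theorem~\ref{23_4_8:21pm} with no separate proof, since it is a direct specialization to $\sigma=\tfrac12$ combined with the routine observation that $\log t_T\asymp_\beta\log T$ for $t_T\in[T^\beta,T]$. Your write-up simply makes explicit the obvious deduction the paper leaves implicit.
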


\smallskip

\subsection{Strategy outline} Our approach is motivated by the ideas of Bondarenko and Seip \cite{BS} on the use of their version of the resonance method and a convolution formula for $\log\zeta(s)$. Soundararajan \cite{S} introduced the resonance method to produce large values of the Riemann zeta function on the critical line and large and small central values of $L$-functions. Also, this method has been the main tool for finding large values for the Riemann zeta function, $L$-functions and other objects related to them, in the critical strip (for instance in \cite{1, 2, 5, BS, BS2, BS3, 3, 4}). The main idea of the resonance method is to find a certain Dirichlet polynomial which ``resonates'' with the object to study. We will construct this Dirichlet polynomial in Section $4$. 

\smallskip

 The strategy can be broadly divided into the following three main steps:

\subsubsection{Step 1: Some results for $S_n(\sigma,t)$.} The first step is to show bounds for $S_n(\sigma,t)$ and for their moments. Bondarenko and Seip only needed to use the Littlewood's estimate \eqref{7_5_12:10am} and bounds of Selberg \cite{S1} for the moments of $S(t)$ and $S_1(t)$, assuming the Riemann hypothesis. In our case, we will use a weaker version of the result of Carneiro, Chirre and Milinovich \cite{CChiM}, to estimate the function $S_n(\sigma,t)$ uniformly in the critical strip. As a simple consequence of this result, we will obtain an estimate for its first moment. Finally, we will extend the convolution formula for $\log\zeta(s)$ given in \cite[Lemma 5]{T} for the function $S_n(\sigma,t)$. Although we restrict our attention to a region close to the critical line, we will show the bounds for $S_n(\sigma,t)$ in the critical strip, which may be of interest for other applications.
\subsubsection{Step 2: The resonator} The construction of our resonator is similar to that made by Bondarenko and Seip \cite[Section 3]{BS}. In particular, when $\sigma=\hh$ we obtain the resonator used by them. A deeper analysis in \cite[Lemmas 3 and 4]{BS} allows us to show these results for a region close to the critical line. This implies that the main relation between the resonator and the convolution formula of $S_n(\sigma,t)$ will follow immediately in the same way as obtained in the case $\sigma=\hh$ \cite[Lemma 7]{BS}.
\subsubsection{Step 3: Proof of Theorem \ref{23_4_8:21pm}} We follow the same outline in the proof of \cite[Theorem 2]{BS}. We will estimate the error terms in the integral that contains the resonator and the convolution formula of $S_n(\sigma,t)$. The main difference in our proof with that of Bondarenko and Seip is in the choice of the sign for a certain Gaussian kernel. This choice will depend on the remainder of $n$  modulo $4$. In particular, this allows to obtain $\Omega_{+}$ results for $S_n(t)$ when $n\equiv 1\mod 4$ and $\Omega$ results in the other cases.

\smallskip

Throughout this paper we will assume the Riemann hypothesis. Besides, for $f\in L^{1}(\R)$, we define the Fourier transform $\widehat{f}$ by
$$
\widehat{f}(\xi)=\int_{-\infty}^{\infty}f(x)e^{-2\pi i\xi x}\dx,
$$
for $\xi \in \R$.

\smallskip
 
\section{Some results for $S_n(\sigma,t)$} 
The main goal in this section is to show bounds for the functions $S_n(\sigma,t)$ and some convolution formulas of these functions with certain kernels. Throughout this section we let $n\geq0$ be an integer and $0<\delta\leq \hh$ be a real number.

\subsection{Bounds for $S_n(\sigma,t)$} The bounds that we will use for the functions $S_n(\sigma,t)$ will be a weaker version of a result of Carneiro, Chirre and Milinovich \cite{CChiM}. 

\begin{theorem} \label{19_4_4:21pm} Assume the Riemann hypothesis. We have the uniform bound
\begin{align*} 
S_n(\sigma,t)=O_{n,\delta}\bigg(\dfrac{(\log t)^{2-2\sigma}}{(\log\log t)^{n+1}}\bigg)
\end{align*}
in $\hh\leq \sigma \leq 1-\delta<1$ and $t>0$ sufficiently large. In particular, we obtain for all $t\in\R$ that
\begin{align}  \label{8_5_8:45pm}
S_n(\sigma,t) = O_{n,\delta}(\log(|t|+2)).
\end{align}
\end{theorem}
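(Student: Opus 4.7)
My plan is to deduce both parts of the theorem from \cite{CChiM}, the result referenced at the start of this subsection. The opening remark --- that we shall use a ``weaker version'' of the Carneiro--Chirre--Milinovich estimate --- already signals that no new argument is required: one simply cites the stronger theorem and verifies that the implicit constant can be taken to depend only on $n$ and $\delta$ (equivalently, uniformly in $\sigma\in[\hh,1-\delta]$). I would inspect the proof in \cite{CChiM} to extract this qualitative uniformity; it is essentially automatic in the extremal-function framework used there, where the $\sigma$-dependence in sharper formulations is controlled by a lower bound for $1-\sigma$. At the left endpoint $\sigma=\hh$ the bound coincides with Littlewood's estimate \eqref{7_5_12:10am}.

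For the ``in particular'' clause \eqref{8_5_8:45pm}, the reduction is elementary. Since $\hh\leq \sigma\leq 1-\delta$ we have $2\delta\leq 2-2\sigma\leq 1$, so for $t\geq e^{e}$,
$$
\frac{(\log t)^{2-2\sigma}}{(\log\log t)^{n+1}} \;\leq\; \log t \;\leq\; \log(|t|+2),
$$
and the uniform bound just established supplies the required estimate on this range. For $0\leq t\leq e^{e}$, the function $(\sigma,t)\mapsto S_n(\sigma,t)$ is bounded on the compact set $[\hh,1-\delta]\times[0,e^{e}]$: $S_{0}(\sigma,t)=\tfrac{1}{\pi}\arg\zeta(\sigma+it)$ is continuous off the (finitely many) ordinates of nontrivial zeros in this window and has only jumps of size $\pm 1$ there, while the antiderivatives $S_n$ for $n\geq 1$ are continuous on $\R$ by construction. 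Finally, the extension rule (odd in $t$ for $n$ even, even in $t$ for $n$ odd) reduces the case $t<0$ to $t>0$.

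I do not foresee a genuine obstacle. The only point that warrants any care is the uniformity of the implicit constant in $\sigma$ on $[\hh,1-\delta]$, and this is precisely what the ``weaker version'' terminology is there to accommodate --- one gives up the sharp constants of \cite{CChiM} in exchange for qualitative uniformity in $\sigma$, which suffices for the applications to the resonance method in Section~4.
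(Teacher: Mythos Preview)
Your approach is correct and matches the paper's: both deduce the bound from \cite[Theorem 2]{CChiM} by verifying that the implicit constants there are uniform for $\sigma\in[\hh,1-\delta]$, and then obtain \eqref{8_5_8:45pm} from boundedness on compact $t$-ranges together with the parity extension to $t<0$. The only difference is that the paper carries out the uniformity check explicitly---writing out the constants $C_{n,\sigma}^{\pm}(t)$ from \cite{CChiM} (which involve $(2\sigma-1)/(\sigma(1-\sigma))$ and a series $H_{n+1}\big(\pm(\log t)^{1-2\sigma}\big)$) and bounding them above and below by positive constants depending only on $n,\delta$---whereas you leave this as a correctly identified but unexecuted routine verification.
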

\begin{proof} It is enough to show when $\sigma>\frac{1}{2}$. For $t$ sufficiently large we have that
$$(1-\sigma)^2\log\log t\geq \delta^2\log\log t \geq 1.$$
Then, by \cite[Theorem 2]{CChiM} we have 
\begin{align} \label{7_5_4:43am}
\Big(-C_{n,\sigma}^{-}(t)+O_{n,\delta}(1)\Big)\dfrac{(\log\log t)^{2-2\sigma}}{(\log\log t)^{n+1}}\leq S_n(\sigma,t) \leq \Big(C_{n,\sigma}^{+}(t)+O_{n,\delta}(1)\Big)\dfrac{(\log\log t)^{2-2\sigma}}{(\log\log t)^{n+1}},
\end{align}
where $C_{n,\sigma}^{\pm}(t)$ are positive functions. For $n\geq 1$ odd, these functions are given by:
\begin{align} \label{7_5_4:37pm}
C^{\pm}_{n,\sigma}(t)=\dfrac{1}{2^{n+1}\pi}\bigg(H_{n+1}\Big(\pm(-1)^{\frac{n+1}{2}}(\log t)^{1-2\sigma}\Big) + \dfrac{2\sigma-1}{\sigma(1-\sigma)}\bigg),
\end{align}
where 
$$
H_n(x)=\displaystyle\sum_{k=0}^{\infty}\dfrac{x^k}{(k+1)^n}.
$$
Note that when $m\geq 2$, we have the bounds 
$$
1-\dfrac{1}{2^{m}}\leq H_m(x) \leq \zeta(m),
$$
for $|x|\leq1$. Therefore, we obtain in \eqref{7_5_4:37pm} for $n\geq 1$ odd and $t$ sufficiently large
\begin{align} \label{7_5_4:49am}
a_{n,\delta}\leq C_{n,\sigma}^{\pm}(t) \leq b_{n,\delta},
\end{align}
for some positive constants $a_{n,\delta}$ and $b_{n,\delta}$. Using \eqref{7_5_4:43am} we obtain the desired result in this case. For $n\geq 2$ even, these functions $C_{n,\sigma}^{\pm}(t)$ are given by:
$$
C_{n,\sigma}^{\pm}(t) = \left(\frac{2 \big(C_{n+1,\sigma}^{+}(t) + C_{n+1,\sigma}^{-}(t)\big) \, C_{n-1,\sigma}^{+}(t)\, C_{n-1,\sigma}^{-}(t)}{C_{n-1,\sigma}^{+}(t) + C_{n-1,\sigma}^{-}(t)}\right)^{\frac{1}{2}}.
$$
Since \eqref{7_5_4:49am} holds for $C_{n-1,\sigma}^{\pm}(t)$ and $C_{n+1,\sigma}^{\pm}(t)$, we have a similar estimate for $C_{n,\sigma}^{\pm}(t)$, and this implies the desired result in this case. When $n=0$ we have that 
\begin{equation*}
C_{0,\sigma}^{\pm}(t) =  \Big(2 \big(C_{1,\sigma}^{+}(t) + C_{1,\sigma}^{-}(t)\big) \, C_{-1,\sigma}(t)\Big)^{\frac{1}{2}},
\end{equation*}
where the function $C_{-1,\sigma}(t)$ is defined by
$$
C_{-1,\sigma}(t)=\dfrac{1}{\pi}\bigg(\dfrac{1}{1+(\log t)^{1-2\sigma}}+ \dfrac{2\sigma-1}{\sigma(1-\sigma)}\bigg).
$$
Using \eqref{7_5_4:49am} and a simple bound for $C_{-1,\sigma}(t)$, we bound $C_{0,\sigma}^{\pm}(t)$ and we conclude. Thefefore, it follows easily that \eqref{8_5_8:45pm} is valid for $t\geq t_0$ where $t_0$ is sufficiently large, and using the fact that the functions $S_n(\sigma,t)$ are bounded in $[\hh,1-\delta]\times [0,t_0]$ we conclude the proof.
\end{proof}

\smallskip

As a simple consequence we have the following estimate
\begin{align} \label{15_5_4:28pm}
\int_{0}^{T}|S_n(\sigma,t)|\dt = O_{n,\delta}(T\log T),
\end{align}
uniformly in $\hh\leq \sigma \leq 1-\delta<1$ and $T\geq 2$. Although this estimate is weak, it is sufficient for our purposes. For the case $\sigma=\hh$, better estimates are given by Littlewood \cite[Theorem 9 and p. 179]{L2} for all $n\geq 0$.

\smallskip

\subsection{Convolution formula} Now, we will obtain convolution formulas for the functions $S_n(\sigma,t)$ with certain kernels. The next lemma was introduced by Selberg \cite{S1}, and was also used by Tsang to study the functions $S(t)$ and $S_1(t)$ \cite{T,T3}. Since we assume the Riemann hypothesis, the factor that contains the zeros outside the critical line disappears.

\begin{lemma} \label{2:36_18_4} Assume the Riemann hypothesis. Suppose that $\hh\leq \sigma\leq 2$, and let $K(x+iy)$ be an analytic function in the horizontal strip $\sigma-2\leq y \leq 0$ satisfying the growth estimate
$$
V_\sigma(x):=\displaystyle\max_{\sigma-2\leq y \leq 0}|K(x+iy)|=O\bigg(\dfrac{1}{|x|\log^2|x|}\bigg)
$$
when $|x|\to\infty$. Then for every $t\neq 0$, we have 
\begin{align} \label{26_4_3:52pm}
\int_{-\infty}^{\infty}\log\zeta(\sigma+i(t+u))K(u)\du=\displaystyle\sum_{m= 2}^{\infty}\dfrac{\Lambda(m)}{m^{\sigma+it}\log m}\widehat{K}\bigg(\dfrac{\log m}{2\pi}\bigg)+ O\big(V_\sigma(-t)\big).
\end{align}
\end{lemma}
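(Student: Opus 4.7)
The proof follows the standard contour-shifting technique of Selberg \cite{S1}. View the integrand $F(u) := \log\zeta(\sigma+i(t+u))\,K(u)$ as a function of complex $u$ on the strip $\sigma-2 \leq \im u \leq 0$; in the $s$-variable $s = \sigma + i(t+u)$ this corresponds to $\sigma \leq \re s \leq 2$. Under the Riemann hypothesis the only singularity of $\log\zeta$ inside this strip is a logarithmic branch-point at $s=1$, corresponding to $u_0 = -t + i(\sigma-1)$, which is present only when $\tfrac12 \leq \sigma \leq 1$.

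The plan is to shift the path of integration from $\im u = 0$ down to $\im u = \sigma-2$, detouring around $u_0$ by a keyhole (a small circle of radius $\varepsilon$ together with a short vertical cut running downward to the boundary of the strip). On $\im u = \sigma-2$ one has $\re s = 2$, so the Dirichlet series $\log\zeta(s) = \sum_{m\geq 2} \Lambda(m)\,(m^s\log m)^{-1}$ converges absolutely. Substituting it into the shifted integral gives
\begin{equation*}
\int_{\im u = \sigma-2} \log\zeta(\sigma+i(t+u))\,K(u)\,\du = \sum_{m\geq 2}\frac{\Lambda(m)}{m^{\sigma+it}\log m}\int_{\im u = \sigma-2} K(u)\,m^{-iu}\,\du.
\end{equation*}
Since $K(u)\,m^{-iu}$ is analytic in the strip and the hypothesis on $V_\sigma$ forces decay along vertical lines, Cauchy's theorem identifies the inner integral with $\int_\R K(v)\,m^{-iv}\,\dv = \widehat{K}(\log m/(2\pi))$, producing the desired main term. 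The closing vertical segments at $\re u = \pm R$ contribute $O(V_\sigma(R)\log R)$ by the trivial bound $\log\zeta = O(\log R)$ in the strip combined with the decay hypothesis on $K$, so they vanish as $R\to\infty$.

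The main obstacle is the keyhole contribution around the branch point $u_0$. The small circle of radius $\varepsilon$ contributes $O(\varepsilon|\log\varepsilon|\cdot\max_{|u-u_0|=\varepsilon}|K(u)|)$ and vanishes as $\varepsilon\to 0$, since $\log\zeta$ has only a logarithmic singularity at $s=1$. The discontinuity of $\log\zeta$ across the cut equals $2\pi i$, so the two sides of the cut yield a net contribution proportional to $\int_{u_0-i}^{u_0} K(u)\,\du$. Taking the cut along the vertical segment $\{-t+iy: \sigma-2 \leq y \leq \sigma-1\}$, the kernel there is uniformly bounded by $V_\sigma(-t)$ and the segment has bounded length, so this term is $O(V_\sigma(-t))$, which is exactly the error claimed. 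The hypothesis $t\neq 0$ is used to ensure that $u_0$ does not coincide with a singularity lying on the original real line when $\sigma = 1$, so that the branch of $\log\zeta$ on $\im u = 0$, fixed by analytic continuation from $\re s > 1$ along horizontal segments, is unambiguous.
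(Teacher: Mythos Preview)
The paper does not supply its own proof of this lemma; it simply refers the reader to \cite[Lemma~5]{T}. Your contour-shifting sketch is precisely the Selberg--Tsang argument that this citation points to, and it is correct in substance.
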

\begin{proof}
See \cite[Lemma 5]{T}.
\end{proof}

 
\smallskip

It is clear that the above lemma gives a convolution formula for the function $S(\sigma,t)$. To obtain a similar formula for the function $S_n(\sigma,t)$ when $n\geq 1$, we need an expression that connects the function $S_n(\sigma,t)$ with $\log\zeta(s)$. 

\begin{lemma} \label{2:30_18_4}
For $\hh\leq \sigma \leq 1$ and $t \neq 0$ we have
	\begin{equation*}
	S_n(\sigma,t) = \frac{1}{\pi} \,\,\im{\left\{\dfrac{i^{n}}{(n-1)!}\int_{\sigma}^{\infty}{\left(u-\sigma\right)^{n-1}\,\log\zeta(u+it)}\,\du\right\}}.
	\end{equation*}
\end{lemma}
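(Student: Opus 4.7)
Denote the right-hand side by $G_n(\sigma,t)$. The plan is to prove $G_n=S_n$ by induction on $n\geq 1$, verifying (i) $\partial_t G_1(\sigma,t)=S(\sigma,t)$ and $\partial_t G_n(\sigma,t)=G_{n-1}(\sigma,t)$ for $n\geq 2$, and (ii) $\lim_{t\to 0^+}G_n(\sigma,t)=\delta_{n,\sigma}$. Since $S_n$ satisfies the same differential relation and the same initial condition by definition, this forces $G_n=S_n$ for all $t>0$. The extension to $t<0$ follows from $\log\zeta(u-it)=\overline{\log\zeta(u+it)}$, which implies $G_n(\sigma,-t)=(-1)^{n+1}G_n(\sigma,t)$, matching the parity used to extend $S_n$ to the negative reals.

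For the base case $n=1$, the rotation $\im(i\log\zeta(u+it))=\log|\zeta(u+it)|$ simplifies
\begin{equation*}
G_1(\sigma,t)=\frac{1}{\pi}\int_\sigma^\infty \log|\zeta(u+it)|\,\du,
\end{equation*}
and the Cauchy--Riemann relation $\partial_t\log|\zeta(u+it)|=-\pi\,\partial_u S(u,t)$ combined with $S(\infty,t)=0$ yields $\partial_t G_1=S$. For the inductive step $n\geq 2$, differentiating under the integral introduces a factor $i\zeta'/\zeta$, and integration by parts in $u$ gives
\begin{equation*}
\int_\sigma^\infty \frac{(u-\sigma)^{n-1}}{(n-1)!}\,\frac{\zeta'}{\zeta}(u+it)\,\du
=-\int_\sigma^\infty \frac{(u-\sigma)^{n-2}}{(n-2)!}\,\log\zeta(u+it)\,\du,
\end{equation*}
where the boundary terms vanish because $(u-\sigma)^{n-1}$ vanishes at $u=\sigma$ for $n\geq 2$, and because $\log\zeta(u+it)=O(2^{-u})$ at infinity. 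Combining this with the algebraic identity $-i^{n+1}=i^{n-1}$ produces $\partial_t G_n=G_{n-1}$.

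The main obstacle is matching values at $t=0^+$. Writing $\log\zeta=\log|\zeta|+i\pi S$ and $i^n=\cos(n\pi/2)+i\sin(n\pi/2)$, one obtains for odd $n=2k-1$ the expression
\begin{equation*}
G_{2k-1}(\sigma,0^+)=\frac{(-1)^{k-1}}{\pi(2k-2)!}\int_\sigma^\infty (u-\sigma)^{2k-2}\log|\zeta(u)|\,\du,
\end{equation*}
which, by the simplex-volume identity, coincides with the iterated integral defining $\delta_{2k-1,\sigma}$. For even $n=2k$, the analogous computation yields
\begin{equation*}
G_{2k}(\sigma,0^+)=\frac{(-1)^k}{(2k-1)!}\int_\sigma^\infty (u-\sigma)^{2k-1}\lim_{t\to 0^+}S(u,t)\,\du,
\end{equation*}
and one must prove $\lim_{t\to 0^+}S(u,t)=-\mathbf{1}_{(1/2,1)}(u)$ for $u>\tfrac{1}{2}$. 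This follows from tracing the continuous variation of $\arg\zeta$ along the path $2\to 2+it\to u+it$ as $t\downarrow 0$: the path passes above the pole of $\zeta$ at $s=1$, and the term $-\log(s-1)$ in $\log\zeta(s)$ contributes a winding of $-\pi$, yielding the claimed limit. A direct integration then gives $\frac{(-1)^{k-1}(1-\sigma)^{2k}}{(2k)!}=\delta_{2k,\sigma}$, completing the induction.
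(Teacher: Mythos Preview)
Your argument is correct and self-contained. The paper's proof is a single line: it cites \cite[Lemma~6]{CChiM}, which presumably gives $S_n(\sigma,t)$ as an $(n{-}1)$-fold iterated integral of $\log\zeta$ (or an equivalent form), and then collapses that iterated integral to the weighted single integral via integration by parts (the simplex-volume identity). You instead work directly with the single-integral expression $G_n$, show $\partial_t G_n=G_{n-1}$ by one integration by parts in $u$, and match the constants of integration by computing $G_n(\sigma,0^+)=\delta_{n,\sigma}$ explicitly, splitting into the cases $n$ odd (where the real part of $\log\zeta$ survives and the simplex identity recovers $\delta_{2k-1,\sigma}$) and $n$ even (where the key input is $\lim_{t\to0^+}S(u,t)=-\mathbf{1}_{(1/2,1)}(u)$, coming from the half-winding around the pole at $s=1$). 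The underlying mechanism---induction on $n$ via the recursion defining $S_n$---is the same in both proofs, but your route avoids the external reference at the cost of spelling out the limit $t\to0^+$ and the parity extension to $t<0$. One minor point worth making explicit: the interchanges of $\partial_t$ and $\lim_{t\to0^+}$ with $\int_\sigma^\infty$ are justified by dominated convergence using $\log\zeta(u+it)=O(2^{-u})$ at infinity and the integrable logarithmic singularity at $u=1$, and the argument as written holds for $t$ not the ordinate of a zero, with the remaining $t$ handled by the averaging convention.
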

\begin{proof} 
This follows from \cite[Lemma 6]{CChiM} and integration by parts.
\end{proof}

\smallskip

Using this expression we obtain the following convolution formula. This generalizes Tsang's conditional formula in \cite{T3} (or \cite[Eq. (10)]{BS}.

\begin{proposition} \label{20_4_3:44am} Assume the Riemann hypothesis and the same conditions for the function $K(x+iy)$ as in Lemma \ref{2:36_18_4}. Suppose further that $K$ is an even real-valued function (or odd real-valued function). Then for $\hh\leq \sigma\leq 1$ and $t\neq 0$, we have  
$$
\int_{-\infty}^{\infty}S_{n}(\sigma,t+s)\,K(s)\ds= \dfrac{1}{\pi}\im\bigg\{i^n\displaystyle\sum_{m= 2}^{\infty}\dfrac{\Lambda(m)}{m^{\sigma+it}(\log m)^{n+1}}\widehat{K}\bigg(\dfrac{\log m}{2\pi}\bigg)\bigg\}+O_n\big(V_{\frac{1}{2}}(t)+||K||_1\big).
$$
\end{proposition}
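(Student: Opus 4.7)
The plan is to combine Lemmas \ref{2:36_18_4} and \ref{2:30_18_4}. For $n=0$, the identity $S_0(\sigma,t)=\frac{1}{\pi}\im\log\zeta(\sigma+it)$ reduces the statement immediately to Lemma \ref{2:36_18_4} by taking imaginary parts (note $i^0=1$).

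For $n\geq 1$, the first step is to insert the representation of $S_n(\sigma,t+s)$ given by Lemma \ref{2:30_18_4} into the left-hand side and switch the order of integration via Fubini, arriving at
$$
\int_{-\infty}^{\infty}S_{n}(\sigma,t+s)\,K(s)\,\ds = \frac{1}{\pi}\im\left\{\frac{i^n}{(n-1)!}\int_{\sigma}^{\infty}(u-\sigma)^{n-1}\int_{-\infty}^{\infty}K(s)\log\zeta(u+i(t+s))\,\ds\,\du\right\}.
$$
The Fubini step is justified by combining the trivial bound $|\log\zeta(u+i\tau)|\ll \log(|\tau|+2)$ on the vertical lines $u\in[\sigma,2]$ (a consequence of RH) with the absolute convergence of the Dirichlet series $\log\zeta(u+i\tau)=\sum_{m\geq 2}\Lambda(m)/(m^{u+i\tau}\log m)$ for $u\geq 2$, together with the integrability of $K$ inherited from the growth hypothesis $V_\sigma(x)=O(|x|^{-1}\log^{-2}|x|)$.

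The second step is to split the $u$-integral at $u=2$. For $u\in[\sigma,2]$, Lemma \ref{2:36_18_4} applied with $u\in[\h,2]$ in place of its parameter $\sigma$ yields
$$
\int_{-\infty}^{\infty}K(s)\log\zeta(u+i(t+s))\,\ds = \sum_{m\geq 2}\frac{\Lambda(m)\widehat{K}(\log m/2\pi)}{m^{u+it}\log m} + O(V_u(-t)).
$$
For $u\geq 2$ the same identity holds without remainder, by termwise integration of the absolutely convergent Dirichlet series. Assembling the two ranges, substituting $v=u-\sigma$, and using the standard identity
$$
\int_0^{\infty}v^{n-1}e^{-v\log m}\,\dv=\frac{(n-1)!}{(\log m)^n}
$$
produces exactly the claimed main term once the prefactor $i^n/(n-1)!$ is restored. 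The remainder obtained by integrating $(u-\sigma)^{n-1}V_u(-t)/(n-1)!$ over $u\in[\sigma,2]$ is $O_n(V_{1/2}(-t))=O_n(V_{1/2}(t))$, because $V_u(-t)$ is monotone non-increasing in $u$ and $|K|$ is even (so $V_{1/2}$ is even) since $K$ is real-valued and even or odd.

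The main obstacle I anticipate is the Fubini justification together with, more subtly, the appearance of the auxiliary $\|K\|_1$ term in the error. This latter should enter through a crude $L^\infty\cdot L^1$ bound: Theorem \ref{19_4_4:21pm} provides $|S_n(\sigma,\tau)|\ll \log(|\tau|+2)$, which can be used to control the contribution on compact pieces of the $s$-domain where Lemma \ref{2:36_18_4} is awkward to apply directly, in particular near $t+s=0$, where the hypothesis $t\neq 0$ is essential.
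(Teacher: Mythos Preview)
Your approach is correct and follows the same skeleton as the paper's (insert Lemma~\ref{2:30_18_4}, apply Fubini, then apply Lemma~\ref{2:36_18_4} to the inner integral). The one technical difference is that the paper first truncates the representation of Lemma~\ref{2:30_18_4} at $u=2$, writing
\[
S_n(\sigma,t)=\frac{1}{\pi}\,\im\left\{\frac{i^n}{(n-1)!}\int_\sigma^{2}(u-\sigma)^{n-1}\log\zeta(u+it)\,\du\right\}+O_n(1),
\]
and then computes $\int_\sigma^{2}(u-\sigma)^{n-1}m^{-u}\,\du=(n-1)!/(m^\sigma(\log m)^n)+O_n(m^{-3/2}(\log m)^{-n})$. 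The $O_n(1)$ convolved with $|K|$ and the tail of this last estimate (bounded via $\|\widehat K\|_\infty\le\|K\|_1$) are exactly the two places where the $\|K\|_1$ term enters in the paper. Your untruncated route with the exact Gamma integral $\int_0^\infty v^{n-1}m^{-v}\,\dv=(n-1)!/(\log m)^n$ bypasses both and actually delivers the sharper remainder $O_n(V_{1/2}(t))$, so your final paragraph is a false alarm: in your argument the $\|K\|_1$ term simply does not arise, and the point $t+s=0$ is irrelevant because Lemma~\ref{2:36_18_4} is invoked with the fixed parameter $t\neq 0$, not with $t+s$.
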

\begin{proof}
For the case $n=0$, we only need to take imaginary parts in \eqref{26_4_3:52pm}. For $n\geq1$, by Lemma \ref{2:30_18_4} we get 
\begin{align*} 
S_n(\sigma,t) & = \frac{1}{\pi} \,\,\im{\left\{\dfrac{i^{n}}{(n-1)!}\int_{\sigma}^{2}{\left(u-\sigma\right)^{n-1}\,\log\zeta(u+it)}\,\du\right\}} +  O_n(1).
\end{align*}
Plugging this in Lemma \ref{2:36_18_4} we obtain
\begin{align} \label{3:36_18_4}
\begin{split}
\int_{-\infty}^{\infty}S_{n}(\sigma,&t+s) \,K(s)\ds \\
& = \dfrac{1}{\pi}\int_{-\infty}^{\infty}\,\im{\left\{\dfrac{i^{n}}{(n-1)!}\int_{\sigma}^{2}{\left(u-\sigma\right)^{n-1}\,\log\zeta(u+i(t+s))}\,\du\right\}}K(s)\ds + O_n\big(||K||_1\big) \\
& = \dfrac{1}{\pi}\im{\left\{\dfrac{i^{n}}{(n-1)!}\int_{\sigma}^{2}\left(u-\sigma\right)^{n-1}\Bigg(\int_{-\infty}^{\infty}{\log\zeta(u+i(t+s))}K(s)\ds\Bigg)\du\right\}} + O_n\big(||K||_1\big)  \\
& = \dfrac{1}{\pi}\im{\left\{\dfrac{i^{n}}{(n-1)!}\int_{\sigma}^{2}\left(u-\sigma\right)^{n-1}\Bigg(\displaystyle\sum_{m=2}^{\infty}\dfrac{\Lambda(m)}{m^{u+it}\log m}\widehat{K}\bigg(\dfrac{\log m}{2\pi}\bigg)\Bigg)\du\right\}}   \\
 & \ \ \ \ + O_n\big(V_{\frac{1}{2}}(t)+||K||_1\big) \\
 & = \dfrac{1}{\pi}\im{\left\{\dfrac{i^{n}}{(n-1)!}\displaystyle\sum_{m=2}^{\infty}\dfrac{\Lambda(m)}{m^{it}\log m}\widehat{K}\bigg(\dfrac{\log m}{2\pi}\bigg)\bigg(\int_{\sigma}^{2}\dfrac{\left(u-\sigma\right)^{n-1}}{m^u}\du\bigg)\right\}}  \\
 & \ \ \ \ + O_n\big(V_{\frac{1}{2}}(t)+||K||_1\big),
\end{split}
\end{align}
where the interchange of the integrals is justified by Fubini's theorem, considering the estimates \cite[Theorem 13.18, Theorem 13.21]{MV}. Using  \cite[\S 2.321 Eq.2] {GR}) we obtain that 
\begin{align*} 
\int_{\sigma}^{2}{\dfrac{(u-\sigma)^{n-1}}{m^{u}}\,}\,\du = \dfrac{\beta_{n-1}}{m^\sigma (\log m)^n} - \dfrac{1}{m^{2}}\displaystyle\sum_{k=0}^{n-1}\dfrac{\beta_k}{(\log m)^{k+1}}(2-\sigma)^{n-1-k},
\end{align*}
where $\beta_k=\frac{(n-1)!}{(n-1-k)!}$. This implies that for each $m\geq 2$ we get
\begin{align*}
\int_{\sigma}^{2}\dfrac{(u-\sigma)^{n-1}}{m^{u}}\,\du & = \dfrac{(n-1)!}{m^{\sigma}(\log m)^{n}} + O_n\bigg(\dfrac{1}{m^{\frac{3}{2}}(\log m)^n}\bigg).
\end{align*}
Inserting this in \eqref{3:36_18_4}, and considering that $||\widehat{K}||_{\infty}\leq ||K||_1$, we obtain the desired result.
\end{proof}

\medskip

\section{The Resonator}
In this section we will construct the resonator. The construction of our resonator is similar to the resonator developed by Bondarenko and Seip \cite[Section 3]{BS}. The results presented here are extensions of their results, for a region near the critical line. The resonator is the function of the form $|R(t)|^2$, where
\begin{align} \label{7:40_16_4}
R(t)=\displaystyle\sum_{m\in\mathcal{M}'}r(m)m^{-it},
\end{align}
and $\mathcal{M}'$ is a suitable finite set of integers. Let $\sigma$ be a positive real number and $N$ be a positive integer sufficiently large, such that 
\begin{align} \label{1_5_9:20pm}
\dfrac{1}{2} \leq \sigma\leq \dfrac{1}{2}+\dfrac{1}{\log\log N}.
\end{align}
Our resonator will depend of $\sigma$ and $N$. For simplicity of notation, we write $\log_2x:=\log\log x$ and $\log_3x:=\log\log\log x$. Let $P$ be the set of prime numbers $p$ such that
\begin{align} \label{20_4_6:46pm}
e\log N\log_{2}N < p \leq \exp\big((\log_2N)^{1/8}\big)\log N\log_2N.
\end{align}
We define $f(n)$ to be the multiplicative function supported on the set of square-free numbers such that
$$
f(p):=\bigg({\dfrac{(\log N)^{1-\sigma}{(\log_2N)}^{\sigma}}{(\log_3N)^{1-\sigma}}}\bigg)\dfrac{1}{p^{\sigma}\,(\log p-\log_2N-\log_3N)},
$$
for $p\in P$ and $f(p)=0$ otherwise. For each $k\in\big\{1,\cdot\cdot\cdot,\big[(\log_2N)^{1/8}\big]\big\}$ we define the following sets:
\begin{align*}
P_k:=\big\{p: \mbox{prime number such that} \hspace{0.1cm} e^k\log N\log_2N<p\leq e^{k+1}\log N\log_2N\big\},
\end{align*}
\begin{align*}
M_k:=\bigg\{n\in\supp(f): n  \hspace{0.1cm} \mbox{has at least} \hspace{0.1cm} \alpha_{k}:=\frac{3(\log N)^{2-2\sigma}}{k^2(\log_3N)^{2-2\sigma}} \hspace{0.1cm} \mbox{prime divisors in}  \hspace{0.1cm} P_k\bigg\},
\end{align*}
\begin{align*}
M'_k:=\big\{n\in M_k: n  \hspace{0.1cm} \mbox{only has prime divisors in} \hspace{0.1cm} P_k\big\}.
\end{align*}
Finally, we define the set
\begin{align*}
\mathcal{M}:=\supp(f) \backslash \bigcup_{k=1}^{[(\log_2N)^{1/8}]}M_k .
\end{align*}
Note that if $m\in\mathcal{M}$ and $d|m$ then $d\in\mathcal{M}$.

\begin{lemma} \label{30_7_12:26pm} We have that
$$
|\mathcal{M}|\leq N,
$$
where $|\mathcal{M}|$ represents the cardinality of $\mathcal{M}$.
\end{lemma}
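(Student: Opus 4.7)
The plan is a direct counting argument based on the disjoint decomposition $P = \bigsqcup_k P_k$. Since each $n \in \supp(f)$ is square-free with prime divisors in $P$, it factors uniquely as $n = \prod_k n_k$, where $n_k$ is the product of those prime factors of $n$ that lie in $P_k$. The number of prime divisors of $n$ in $P_k$ equals $\omega(n_k)$, so the requirement $n \in \mathcal{M}$, namely that $n$ has fewer than $\alpha_k$ prime divisors in $P_k$ for every admissible $k$, decouples completely across $k$. This yields the exact identity
\[
|\mathcal{M}| = \prod_{k=1}^{[(\log_2 N)^{1/8}]} \sum_{0 \leq j < \alpha_k} \binom{|P_k|}{j}.
\]

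For each factor I would invoke the prime number theorem, which gives $|P_k| = (1+o(1))(e-1)e^k \log N$ uniformly in $k \leq (\log_2 N)^{1/8}$, together with the observation that the narrow range \eqref{1_5_9:20pm} forces both $(\log N)^{2-2\sigma}$ and $(\log_3 N)^{2-2\sigma}$ to agree with $\log N$ and $\log_3 N$ respectively up to absolute constants. In particular $\alpha_k < |P_k|/2$ for all admissible $k$ and large $N$, so I would apply the tail bound
\[
\sum_{0 \leq j < \alpha_k} \binom{|P_k|}{j} \leq \alpha_k \binom{|P_k|}{\lfloor \alpha_k \rfloor} \leq \alpha_k \left(\frac{e\,|P_k|}{\alpha_k}\right)^{\alpha_k}.
\]

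Taking logarithms reduces the lemma to verifying $\sum_k \alpha_k \log(e|P_k|/\alpha_k) \leq \log N + o(\log N)$. Since $|P_k|/\alpha_k \asymp e^k k^2 \log_3 N$, we have $\log(|P_k|/\alpha_k) = k + O(\log_3 N)$, and the leading contribution is
\[
\sum_{k=1}^{[(\log_2 N)^{1/8}]} \alpha_k \cdot k \;\sim\; \frac{3(\log N)^{2-2\sigma}}{(\log_3 N)^{2-2\sigma}} \sum_{k \leq (\log_2 N)^{1/8}} \frac{1}{k} \;\sim\; \frac{3\log N}{\log_3 N}\cdot\frac{\log_3 N}{8} \;=\; \frac{3}{8}\log N,
\]
which is strictly below $\log N$. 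The remaining contributions (the $\log \alpha_k$ factor and the $O(\log_3 N)$ correction to $\log(|P_k|/\alpha_k)$) are $o(\log N)$, giving $\log |\mathcal{M}| \leq \log N$ for $N$ sufficiently large.

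The main obstacle is uniformity on two fronts: the PNT estimate for $|P_k|$ must be valid throughout the full range $k \leq (\log_2 N)^{1/8}$, and the powers $(\log N)^{2-2\sigma}$ and $(\log_3 N)^{2-2\sigma}$ must be controlled uniformly on the narrow interval $[1/2,\,1/2+1/\log_2 N]$. Once these are in hand, the proof collapses to the harmonic-type computation above, and the constants in the definition of $\alpha_k$ and in the range of $k$ (the factor $3$ and the exponent $1/8$) were evidently chosen so that the final bound $3/8$ lies strictly below $1$ with comfortable room to absorb the error terms.
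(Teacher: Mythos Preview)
Your approach is essentially the same as the paper's: both decompose $|\mathcal{M}|$ via the product over $k$ of binomial tail sums in $P_k$, bound each factor by a Stirling-type estimate of the form $(e|P_k|/\alpha_k)^{\alpha_k}$, and then check that the resulting exponent is $(c+o(1))\log N$ with $c<1$ using the constraint \eqref{1_5_9:20pm}. Two small caveats: your ``exact identity'' is really only an inequality, since $P\subsetneq\bigsqcup_k P_k$ (the last $P_k$ may overhang the top of $P$), though this is the direction you need; and your blanket estimate $\log(|P_k|/\alpha_k)=k+O(\log_3 N)$, once multiplied by $\sum_k\alpha_k\asymp \log N/\log_3 N$, yields only an $O(\log N)$ correction rather than $o(\log N)$---to get $o(\log N)$ you must track the pieces $2\log k$, $(2\sigma-1)\log_2 N\le 2$, and $(2-2\sigma)\log_4 N$ individually, exactly as the paper does in its displayed sum.
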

\begin{proof} The proof follows the same outline that \cite[Lemma 2]{BS2}. The main difference is the appearance of the term $(\log_3N)^{2\sigma-1},$ which is well estimated, whenever \eqref{1_5_9:20pm} holds. It allows us to obtain the same estimate for the cardinality of $\mathcal{M}$ as the case $\sigma=\hh$. By \cite[Eq. (9)-(10)]{BS2}, we have that
$$
{[x] \choose [y]} \leq \exp\big(y(\log x-\log y) + 2y + \log x\big),
$$
for $1\leq y\leq x$ and 
$$
2{m \choose n-1} \leq {m \choose n},
$$
for $3n-1\leq m$. By the prime number theorem, the cardinality of each $P_k$ is at most $e^{k+1}\log N$. Therefore, using the above inequalities and \eqref{1_5_9:20pm}
\begin{align*}
|\mathcal{M}| & \leq \displaystyle\prod_{k=1}^{[(\log_2N)^{1/8}]}\displaystyle\sum_{j=0}^{[\alpha_k]}{\big[e^{k+1}\log N\big] \choose j}\leq \displaystyle\prod_{k=1}^{[(\log_2N)^{1/8}]}2\,{\big[e^{k+1}\log N\big] \choose [\alpha_k]} \nonumber \\
& \leq \exp\Bigg(\displaystyle\sum_{k=1}^{[(\log_2N)^{1/8}]}\frac{3(\log N)^{2-2\sigma}}{(\log_3N)^{2-2\sigma}}\Bigg(\dfrac{1}{k}+\dfrac{3+2\log k}{k^2}+\dfrac{(2\sigma-1)\log_2N}{k^2}+ \dfrac{(2-2\sigma)\log_4N}{k^2}\Bigg)+3k+\log_2N\Bigg) \nonumber  \\
& \leq \exp\Bigg(\bigg(\dfrac{3}{4}+o(1)\bigg)(\log N)^{2-2\sigma}(\log_3N)^{2\sigma-1}\Bigg) \leq  \exp\Bigg(\bigg(\dfrac{3}{4}+o(1)\bigg)(\log N)(\log_3N)^{2/\log_2N}\Bigg).
\end{align*}
Then, for $N$ sufficiently large we get that $|\mathcal{M}|\leq N$.
\end{proof}

\smallskip

\begin{lemma} \label{2_5_2:20am}
For all $k=1,\cdot\cdot\cdot,[(\log_2N)^{1/8}]$ we have, as $N\to \infty$
$$
\displaystyle\sum_{p\in P_k}\dfrac{1}{p^{2\sigma}}=(1+o(1))\int_{e^k\log N\log_2N}^{e^{k+1}\log N\log_2N}\dfrac{1}{y^{2\sigma}\log y}\dy,
$$
where $o(1)$ is independent of $k$. In particular, we have that
\begin{align} \label{10_5_1:20am}
(d+o(1))\dfrac{1}{(\log_2N)^{2\sigma}}<\displaystyle\sum_{p\in P_k}\dfrac{1}{p^{2\sigma}}<(2+o(1))\dfrac{1}{(\log_2N)^{2\sigma}},
\end{align}
for some constant $0<d<1$.
\end{lemma}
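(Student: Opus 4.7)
The plan is to apply partial summation together with the prime number theorem (PNT), and then estimate the resulting integral directly. Write $a_k = e^k\log N\log_2 N$ and $b_k = e^{k+1}\log N\log_2 N$, so that $P_k$ consists of the primes in $(a_k,b_k]$. Using PNT in the form $\pi(y) = \int_2^y \du/\log u + O_A(y/(\log y)^A)$, valid for any fixed $A>0$, Abel summation gives
\begin{equation*}
\sum_{p\in P_k}\frac{1}{p^{2\sigma}} = \int_{a_k}^{b_k}\frac{\dy}{y^{2\sigma}\log y} + O_A\!\left(\frac{1}{(\log_2 N)^A}\right),
\end{equation*}
once one checks that the boundary and secondary error terms coming from integration by parts are controlled by $a_k^{1-2\sigma},b_k^{1-2\sigma}=O(1)$ (since $|1-2\sigma|\leq 2/\log_2 N$) and by the uniform bound $\int_{a_k}^{b_k}\dy/y^{2\sigma}=O(1)$. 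Taking $A=3$ will be more than enough.

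Next I would estimate the integral on the right. The substitution $y=e^u$ transforms it into $\int_{\log a_k}^{\log b_k} e^{(1-2\sigma)u}/u\,\du$, an integral over an interval of length exactly $1$. Uniformly for $1\leq k\leq (\log_2 N)^{1/8}$ and $\sigma$ in the stated range, one has $\log a_k,\log b_k = (1+o(1))\log_2 N$, and the integrand is monotone non-increasing in $u$ with ratio of endpoint values equal to $e^{-(2\sigma-1)} = 1+o(1)$. Hence
\begin{equation*}
\int_{a_k}^{b_k}\frac{\dy}{y^{2\sigma}\log y} = (1+o(1))\,\frac{(\log N)^{1-2\sigma}}{\log_2 N},
\end{equation*}
with $o(1)$ independent of $k$. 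Since this is much larger than the error term from the first step, the asymptotic identity follows.

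For the two-sided estimate \eqref{10_5_1:20am}, I would use that $(1-2\sigma)\log_2 N\in[-2,0]$ forces $(\log N)^{1-2\sigma}\in[e^{-2},1]$, and that $(\log_2 N)^{2\sigma-1}=e^{(2\sigma-1)\log_3 N}=1+o(1)$ because $(2\sigma-1)\log_3 N\leq 2\log_3 N/\log_2 N\to 0$. Therefore $1/(\log_2 N)^{2\sigma}=(1+o(1))/\log_2 N$, and so the integral lies between $(e^{-2}+o(1))/(\log_2 N)^{2\sigma}$ and $(1+o(1))/(\log_2 N)^{2\sigma}$; this yields \eqref{10_5_1:20am} with, for instance, $d=e^{-2}/2$.

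The proof is essentially a routine application of PNT plus partial summation; the only delicate point is bookkeeping, namely verifying that every $o(1)$ estimate is uniform in $k$ throughout $1\leq k\leq [(\log_2 N)^{1/8}]$ and that the PNT error of size $O_A((\log_2 N)^{-A})$ is genuinely negligible compared to the main term, which is of order $1/\log_2 N$. Both uniformities follow from the explicit bounds above, so I do not anticipate any substantial obstacle.
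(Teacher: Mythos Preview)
Your proof is correct and follows essentially the same route as the paper: prime number theorem plus partial summation to convert the sum over $P_k$ into the integral $\int_{a_k}^{b_k} y^{-2\sigma}(\log y)^{-1}\,\dy$, followed by elementary pointwise bounds on that integral. The only differences are cosmetic: you invoke the unconditional PNT with error $O_A(y/(\log y)^A)$ where the paper uses the RH-conditional form $\pi(x)=\mathrm{Li}(x)+O(x^{1/2}\log x)$, and you push slightly further to obtain the asymptotic $\int_{a_k}^{b_k} y^{-2\sigma}(\log y)^{-1}\,\dy=(1+o(1))(\log N)^{1-2\sigma}/\log_2 N$ before deducing \eqref{10_5_1:20am}, whereas the paper bounds the integral above and below directly; both variants are entirely adequate here.
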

\begin{proof}
Using \cite[Theorem 13.1]{MV}, under the Riemann hypothesis we have  
$$
\pi(x)=\displaystyle\int_{2}^{x}\dfrac{1}{\log y}\dy + O\big(x^\frac{1}{2}\log x\big),
$$
where $\pi(x)$ is the function that counts the prime numbers not exceeding $x$. Then, using integration by parts we get 
\begin{align*} 
\displaystyle\sum_{p\in P_k}\dfrac{1}{p^{2\sigma}}& =\int_{e^k\log N\log_2N}^{e^{k+1}\log N\log_2N}\dfrac{1}{y^{2\sigma}\log y}\dy +  O\Bigg(\int_{e^k\log N\log_2N}^{e^{k+1}\log N\log_2N}\dfrac{\log y}{y^{2\sigma+\frac{1}{2}}}\dy\Bigg) \nonumber \\
& = \bigg(1+O\bigg(\dfrac{1}{(\log N)^{1/4}}\bigg)\bigg)\int_{e^k\log N\log_2N}^{e^{k+1}\log N\log_2N}\dfrac{1}{y^{2\sigma}\log y}\dy.
\end{align*}
Now we can see that
$$
\int_{e^k\log N\log_2N}^{e^{k+1}\log N\log_2N}\dfrac{1}{y^{2\sigma}\log y}\dy\leq \dfrac{e^k\log N\log_2N(e-1)}{(e^k\log N\log_2N)^{2\sigma}\log\big(e^k\log N\log_2N\big)} < \dfrac{2}{(\log_2N)^{2\sigma}}.
$$
On the other hand, we know that $(e^{k}\log N)^{2\sigma-1}< (\log N)^{4\sigma-2}\leq e^{4}$ for all $1\leq k\leq [(\log_2N)^{1/8}]$. Therefore 
$$
\int_{e^k\log N\log_2N}^{e^{k+1}\log N\log_2N}\dfrac{1}{y^{2\sigma}\log y}\dy\geq \dfrac{e^k\log N\log_2N(e-1)}{(e^{k+1}\log N\log_2N)^{2\sigma}\log\big(e^{k+1}\log N\log_2N\big)} > \dfrac{d}{(\log_2N)^{2\sigma}},
$$
for some constant $0<d<1$. 
\end{proof}

\smallskip
The following lemma can be considered as an extension of \cite[Lemma 4]{BS} to the region \eqref{1_5_9:20pm}.
\begin{lemma} \label{10_5_1:57am}
We have
$$
\dfrac{1}{\displaystyle\sum_{l\in\N}f(l)^2}\displaystyle\sum_{n\in\mathcal{M}}f(n)^2\,\displaystyle\sum_{p |n}\dfrac{1}{f(p)\,p^{\sigma}}\geq c\,\dfrac{(\log N)^{1-\sigma}(\log_3N)^{\sigma}}{ (\log_2N)^{\sigma}},
$$
for some universal constant $c>0$.
\end{lemma}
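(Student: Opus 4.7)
The plan is to follow the strategy in \cite[Lemma~4]{BS} adapted to the region \eqref{1_5_9:20pm}, which amounts to showing the estimate holds when $\mathcal{M}$ is replaced by the full $\supp(f)$, and then to show that the contribution of the excluded set $\bigcup_k M_k$ is negligible. Accordingly, I would decompose
\est{
\dfrac{1}{\sum_{l\in\N}f(l)^2}\sum_{n\in\mathcal{M}}f(n)^2\sum_{p|n}\dfrac{1}{f(p)\,p^{\sigma}}
&=\dfrac{1}{\sum_{l}f(l)^2}\sum_{n\in\supp(f)}f(n)^2\sum_{p|n}\dfrac{1}{f(p)\,p^{\sigma}}\\
&\quad-\dfrac{1}{\sum_{l}f(l)^2}\sum_{n\in\bigcup_k M_k}f(n)^2\sum_{p|n}\dfrac{1}{f(p)\,p^{\sigma}},
}
and call these the \emph{main term} and \emph{error term}.

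For the main term, I would swap the order of summation and exploit that $f$ is multiplicative and supported on square-free integers: since $\sum_{n:\,p\mid n}f(n)^2=\frac{f(p)^2}{1+f(p)^2}\sum_{l}f(l)^2$, the main term collapses to $\sum_{p\in P}\frac{f(p)}{p^\sigma(1+f(p)^2)}$. A check using \eqref{1_5_9:20pm} shows $f(p)=o(1)$ uniformly for $p\in P$, so $(1+f(p)^2)^{-1}\geq \h$, and it suffices to bound $\sum_{p\in P}f(p)/p^\sigma$ from below. Inserting the definition of $f$ and the partition $P=\bigsqcup_{k}P_k$, I use $\log p-\log_2 N-\log_3 N\in(k,k+1]$ for $p\in P_k$ together with Lemma~\ref{2_5_2:20am} to estimate
\est{
\sum_{p\in P_k}\frac{f(p)}{p^{\sigma}}\;\asymp\;\frac{(\log N)^{1-\sigma}(\log_2 N)^{\sigma}}{(\log_3 N)^{1-\sigma}}\cdot\frac{1}{k}\cdot\frac{1}{(\log_2 N)^{2\sigma}}.
}
Summing the harmonic series $\sum_{k=1}^{[(\log_2 N)^{1/8}]}k^{-1}\asymp \log_3 N$ yields the desired lower bound $c\,(\log N)^{1-\sigma}(\log_3 N)^{\sigma}/(\log_2 N)^{\sigma}$.

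The main obstacle is then controlling the error term, for which I would use a Chernoff/exponential moment argument, as in \cite{BS2}. For fixed $k$ and $\lambda>0$, factoring the generating function across primes gives
\est{
\sum_{n\in M_k}f(n)^2\leq e^{-\lambda\alpha_k}\sum_{n\in\supp(f)}f(n)^2\,e^{\lambda\omega_k(n)}=e^{-\lambda\alpha_k}\Big(\sum_l f(l)^2\Big)\prod_{p\in P_k}\frac{1+e^{\lambda}f(p)^2}{1+f(p)^2},
}
where $\omega_k(n):=|\{p\in P_k:p\mid n\}|$. A routine computation using Lemma~\ref{2_5_2:20am} shows $\sum_{p\in P_k}f(p)^2\asymp (\log N)^{2-2\sigma}/(k^2(\log_3 N)^{2-2\sigma})\asymp \alpha_k/3$; choosing $\lambda=\log 3$ then produces an absolute exponential decay $e^{-c\alpha_k}$ for the ratio $\sum_{n\in M_k}f(n)^2/\sum_l f(l)^2$. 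Since the integer $n$ appearing inside the sum has at most $O(\log N/\log_2 N)$ prime divisors and each $1/(f(p)p^{\sigma})$ is $O((\log_2 N)^{1/8}(\log_3 N)^{1-\sigma}/((\log N)^{1-\sigma}(\log_2 N)^{\sigma}))$, the inner sum is at worst polynomially large in $\log N$. Multiplying by $e^{-c\alpha_k}$ and summing over $k$ gives an error term of size $o(1)$ times the main term, which completes the proof.
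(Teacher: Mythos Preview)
Your approach is correct and takes a genuinely different route from the paper's. The paper introduces an auxiliary set $\mathcal{L}\subset\mathcal{M}$ consisting of those $n\in\mathcal{M}$ with more than $\beta_k:=\tfrac{d}{36}\alpha_k$ prime factors in each $P_k$, proves via two Chernoff-type estimates that $\supp(f)\setminus\mathcal{L}$ carries $o(1)$ of the mass $\sum_n f(n)^2$, and then bounds $\sum_{p\mid n}1/(f(p)p^\sigma)$ from below \emph{pointwise} on $\mathcal{L}$. You instead evaluate the full-support average $\sum_{p\in P}f(p)/(p^\sigma(1+f(p)^2))$ directly by swapping sums, and then subtract the contribution of $\bigcup_k M_k$. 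Your route is cleaner for the main term and avoids the sets $L_k$ entirely; the paper's route has the advantage that the Chernoff step only needs to control the unweighted mass of $M_k$ (and of $L_k$), rather than the weighted sum carrying the factor $\sum_{p\mid n}1/(f(p)p^\sigma)$.

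Two small points need tightening. First, the claim that $n\in M_k$ has $O(\log N/\log_2 N)$ prime divisors is unjustified: unlike elements of $\mathcal{M}$, elements of $M_k$ are unconstrained from above and may have up to $|P|$ prime factors. This is harmless, since the trivial bound $\sum_{p\mid n}1/(f(p)p^\sigma)\le |P|\cdot\max_{p\in P}1/(f(p)p^\sigma)\ll e^{(\log_2 N)^{1/8}}(\log N)^{\sigma}$ is still crushed by the Chernoff factor $e^{-c\alpha_k}$, because $\alpha_k\gg(\log N)^{2-2\sigma}/(\log_2 N)^{1/4}$ uniformly in $k$ under \eqref{1_5_9:20pm}. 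Second, the specific choice $\lambda=\log 3$ does not give decay: Lemma~\ref{2_5_2:20am} only yields $\sum_{p\in P_k}f(p)^2\le(\tfrac{2}{3}+o(1))\alpha_k$, so the exponent becomes $(-\lambda+\tfrac{2}{3}(e^\lambda-1)+o(1))\alpha_k$, which is positive at $\lambda=\log 3$. Any small fixed $\lambda>0$ with $\tfrac{3}{2}\lambda>e^\lambda-1$ works instead.
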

\begin{proof} The proof is similar to \cite[Lemma 4]{BS}. For each $k\in\big\{1,\cdot\cdot\cdot,\big[(\log_2N)^{1/8}\big]\big\}$ we define the following sets:
\begin{align*}
L_k:=\bigg\{n\in\supp(f): n  \hspace{0.1cm} \mbox{has at most} \hspace{0.1cm} \beta_{k}:=\frac{d\,(\log N)^{2-2\sigma}}{12k^2(\log_3N)^{2-2\sigma}} \hspace{0.1cm} \mbox{prime divisors in}  \hspace{0.1cm} P_k\bigg\},
\end{align*}
where $d$ is the constant mentioned in Lemma \ref{2_5_2:20am}, and 
\begin{align*}
L'_k:=\big\{n\in L_k: n  \hspace{0.1cm} \mbox{only has prime divisors in} \hspace{0.1cm} P_k\big\}.
\end{align*}
Finaly, we define the set 
$$
\mathcal{L}:=\mathcal{M}\backslash \bigcup_{k=1}^{[(\log_2N)^{1/8}]}L_k.
$$
Now to prove the lemma, it is enough to show that
\begin{align} \label{30_4_5:9pm}
\dfrac{1}{\displaystyle\sum_{l\in\N}f(l)^2}\displaystyle\sum_{n\notin\mathcal{L}}f(n)^2= o(1),  \hspace{0.2cm} N \to \infty.
\end{align}
Indeed, using \eqref{30_4_5:9pm} and the fact that $\mathcal{L}\subset\mathcal{M}$ we get 
\begin{align*}
\dfrac{1}{\displaystyle\sum_{l\in\N}f(l)^2}\displaystyle\sum_{n\in\mathcal{M}}f(n)^2\,\displaystyle\sum_{p |n}\dfrac{1}{f(p)\,p^{\sigma}}& \geq \dfrac{1}{\displaystyle\sum_{l\in\N}f(l)^2}\displaystyle\sum_{n\in\mathcal{M}}f(n)^2\min_{n\in\mathcal{L}}\,\displaystyle\sum_{p |n}\dfrac{1}{f(p)\,p^{\sigma}} \nonumber \\
& \geq \big(1-o(1)\big)\min_{n\in\mathcal{L}}\,\displaystyle\sum_{p |n}\dfrac{1}{f(p)\,p^{\sigma}} \nonumber \\
& =  \big(1-o(1)\big)\displaystyle\sum_{k=1}^{[(\log_2N)^{1/8}]}\dfrac{d\,(\log N)^{2-2\sigma}}{12k^2(\log_3N)^{2-2\sigma}}\min_{p\in P_k}\dfrac{1}{f(p)\,p^{\sigma}}  \nonumber \\
& \geq \big(1-o(1)\big)\displaystyle\sum_{k=1}^{[(\log_2N)^{1/8}]}\dfrac{d\,(\log N)^{2-2\sigma}}{12k^2(\log_3N)^{2-2\sigma}}\bigg(\dfrac{k(\log_3N)^{1-\sigma}}{(\log N)^{1-\sigma}(\log_2N)^{\sigma}}\bigg)\nonumber \\
& \geq c\,\dfrac{(\log N)^{1-\sigma}(\log_3N)^{\sigma}}{ (\log_2N)^{\sigma}},
\end{align*}
for some constant $c>0$. Therefore, it remains to prove \eqref{30_4_5:9pm}. Since $$
\mathcal{L}:=\supp(f)\backslash \bigcup_{k=1}^{[(\log_2N)^{1/8}]}\big(M_k\cup L_k\big),
$$
it is enough to prove that when $N \to \infty$
\begin{align} \label{30_4_5:11pm}
\dfrac{1}{\displaystyle\sum_{l\in\N}f(l)^2}\displaystyle\sum_{k=1}^{[(\log_2N)^{1/8}]}\displaystyle\sum_{n\in L_k}f(n)^2= o(1),  
\end{align}
and
\begin{align} \label{30_4_5:12pm}
\dfrac{1}{\displaystyle\sum_{l\in\N}f(l)^2}\displaystyle\sum_{k=1}^{[(\log_2N)^{1/8}]}\displaystyle\sum_{n\in M_k}f(n)^2= o(1).
\end{align}
First we will prove \eqref{30_4_5:11pm}. For each $k\in\big\{1,\cdot\cdot\cdot,\big[(\log_2N)^{1/8}\big]\big\}$ and for any $0<b<1$ we have 
\begin{align} \label{1_5_3:36pm}
\begin{split}
\dfrac{1}{\displaystyle\sum_{l\in\N}f(l)^2}\displaystyle\sum_{n\in {L}_k}f(n)^2 & =\dfrac{1}{\displaystyle\prod_{p\in P_k}(1+f(p)^2)}\displaystyle\sum_{n\in L'_k}f(n)^2 \leq  b^{-\beta_k}\displaystyle\prod_{p\in P_{k}}\dfrac{\big(1+bf(p)^2\big)}{(1+f(p)^2)} \\
& \leq b^{-\beta_k}\exp\Bigg((b-1)\displaystyle\sum_{p\in P_{k}}\frac{f(p)^2}{1+f(p)^2}\Bigg).
\end{split}
\end{align}
Since $f(p)\leq 1$, using the left-hand side inequality of \eqref{10_5_1:20am} we get
\begin{align*}
\displaystyle\sum_{p\in P_{k}}\frac{f(p)^2}{1+f(p)^2}& \geq \dfrac{1}{2}\displaystyle\sum_{p\in P_{k}}f(p)^2=  \bigg({\dfrac{(\log N)^{2-2\sigma}{(\log_2N)}^{2\sigma}}{2(\log_3N)^{2-2\sigma}}}\bigg)\displaystyle\sum_{p\in P_{k}}\dfrac{1}{p^{2\sigma}\,(\log p-\log_2N-\log_3N)^2} \nonumber \\
& \geq \bigg({\dfrac{(\log N)^{2-2\sigma}}{8k^2(\log_3N)^{2-2\sigma}}}\bigg)(d+o(1)).\end{align*}
This implies in \eqref{1_5_3:36pm} that 
\begin{align*}
\dfrac{1}{\displaystyle\sum_{l\in\N}f(l)^2}\displaystyle\sum_{n\in {L}_k}f(n)^2\leq \exp\Bigg(\bigg(\dfrac{d}{8}(b-1) -\dfrac{d}{12}\log b + o(1)\bigg)\frac{(\log N)^{2-2\sigma}}{ k^2(\log_3N)^{2-2\sigma}}\Bigg).
\end{align*}
Therefore, choosing $b$ close to $1$ we obtain $3(b-1) -2\log b<0$ and summing over $k$ we obtain \eqref{30_4_5:11pm}. The proof of \eqref{30_4_5:12pm} is similar. For each $k\in\big\{1,\cdot\cdot\cdot,\big[(\log_2N)^{1/8}\big]\big\}$ and for any $b>1$ we get  
\begin{align} \label{1_5_3:36pm2}
\dfrac{1}{\displaystyle\sum_{l\in\N}f(l)^2}\displaystyle\sum_{n\in {M}_k}f(n)^2 \leq b^{-\alpha_k}\exp\bigg((b-1)\displaystyle\sum_{p\in P_{k}}f(p)^2\bigg).
\end{align}
Using the right-hand side inequality of \eqref{10_5_1:20am} we have 
\begin{align*} 
\displaystyle\sum_{p\in P_{k}}f(p)^2& \leq \bigg({\dfrac{(\log N)^{2-2\sigma}}{k^2(\log_3N)^{2-2\sigma}}}\bigg)(2+o(1)).
\end{align*}
This implies in \eqref{1_5_3:36pm2} that
\begin{align*}
\dfrac{1}{\displaystyle\sum_{l\in\N}f(l)^2}\displaystyle\sum_{n\in {L}_k}f(n)^2\leq \exp\Bigg(\big(2(b-1)-3\log b + o(1)\big)\frac{(\log N)^{2-2\sigma}}{ k^2(\log_3N)^{2-2\sigma}}\Bigg).
\end{align*}
Finally, choosing $b$ close to $1$ we obtain $2(b-1) -3\log b<0$ and summing over $k$ we obtain \eqref{30_4_5:12pm}.
\end{proof}

\smallskip

\subsection{Construction of the resonator}
Let $0\leq \beta<1$ be a fixed number and consider the positive real number $\kappa=(1-\beta)/2$. Note that $\kappa+\beta<1$. Let $\sigma$ be a positive real number and $T$ sufficiently large such that
$$
\dfrac{1}{2}\leq \sigma\leq \dfrac{1}{2}+\dfrac{1}{\log\log T}.
$$
Then we write $N=[T^{\kappa}]$. Note that $\sigma$ and $N$ satisfy the relation \eqref{1_5_9:20pm}. Now, let $\mathcal{J}$ be the set of integers $j$ such that
$$
\Big[\big(1+T^{-1}\big)^{j},\big(1+T^{-1}\big)^{j+1}\Big)\bigcap \mathcal{M} \neq \emptyset,
$$
and we define $m_j$ to be the minimum of $\big[(1+T^{-1})^{j},(1+T^{-1})^{j+1}\big)\cap \mathcal{M}$ for $j$ in $\mathcal{J}$. Consider the set
$$
\mathcal{M}':=\{m_j:j\in\mathcal{J}\}
$$
and finally we define
$$
r(m_j):=\Bigg(\displaystyle\sum_{n\in\mathcal{M},(1+T^{-1})^{j-1}\leq n \leq (1+T^{-1})^{j+2}}f(n)^2\Bigg)^{\frac{1}{2}},
$$
for every $m_j\in\mathcal{M}'$. This defines the resonator \eqref{7:40_16_4}.

\smallskip

\begin{proposition} \label{19_4_6:04pm}
We have the following properties about the resonator:
\begin{enumerate}[(i)]
\item $|\mathcal{M'}|\leq |\mathcal{M}| \leq N$.
\item $\displaystyle\sum_{m\in\mathcal{M}'}r(m)^2\leq 4\displaystyle\sum_{l\in\mathcal{M}}f(l)^2$.
\item $|R(t)|^2 \leq R(0)^2\ll T^{\kappa}\displaystyle\sum_{l\in\mathcal{M}}f(l)^2$.
\end{enumerate}
\end{proposition}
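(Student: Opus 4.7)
The proposition is essentially a bookkeeping statement about the construction just completed, and the plan is to verify each item directly from the definitions together with Lemma \ref{30_7_12:26pm}. For (i), I would note that the map $j \mapsto m_j$ sends $\mathcal{J}$ injectively into $\mathcal{M}$: the intervals $\big[(1+T^{-1})^j, (1+T^{-1})^{j+1}\big)$ are pairwise disjoint, and each $m_j$ lies in its own interval. Hence $|\mathcal{M}'| = |\mathcal{J}| \leq |\mathcal{M}|$, and Lemma \ref{30_7_12:26pm} supplies $|\mathcal{M}| \leq N$.

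For (ii), I would substitute the definition of $r(m_j)^2$ and interchange the order of summation:
\[
\sum_{m_j \in \mathcal{M}'} r(m_j)^2 = \sum_{n \in \mathcal{M}} f(n)^2 \, \#\big\{j \in \mathcal{J} : (1+T^{-1})^{j-1} \leq n \leq (1+T^{-1})^{j+2}\big\}.
\]
Setting $A = 1+T^{-1}$, the condition on $j$ rewrites as $\log_A n - 2 \leq j \leq \log_A n + 1$, a closed real interval of length $3$, which contains at most $4$ integers. Thus the inner count is bounded by $4$, which yields (ii).

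For (iii), the first inequality is the triangle inequality together with the non-negativity of the coefficients $r(m)$, since
\[
|R(t)| \leq \sum_{m \in \mathcal{M}'} r(m)\, |m^{-it}| = \sum_{m \in \mathcal{M}'} r(m) = R(0).
\]
For the second inequality, I would apply Cauchy--Schwarz and invoke (i) and (ii):
\[
R(0)^2 = \Bigg(\sum_{m \in \mathcal{M}'} r(m)\Bigg)^2 \leq |\mathcal{M}'| \sum_{m \in \mathcal{M}'} r(m)^2 \leq 4N \sum_{l \in \mathcal{M}} f(l)^2 \ll T^{\kappa} \sum_{l \in \mathcal{M}} f(l)^2,
\]
using $N = [T^{\kappa}]$. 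No step presents a genuine obstacle; the only subtlety is the overlap count in (ii), which requires only the observation that each window $[A^{j-1}, A^{j+2}]$ is the union of three consecutive base intervals, so any given $n$ lies in at most a bounded (in fact four) such windows.
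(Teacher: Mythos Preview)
Your proof is correct and follows exactly the approach the paper sketches: (i) and (ii) from the definitions and Lemma~\ref{30_7_12:26pm}, and (iii) from the triangle inequality together with Cauchy--Schwarz applied to (i) and (ii). The paper's own proof is a two-line summary of precisely these steps, so you have simply filled in the details the author left implicit.
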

\begin{proof}
$(i)$ and $(ii)$ follow by the definition of $\mathcal{M}$, $\mathcal{M}'$ and Lemma \ref{30_7_12:26pm}. The left-hand side inequality of $(iii)$ is obvious. The right-hand side inequality of $(iii)$ follows by $(i)$, $(ii)$ and the Cauchy-Schwarz inequality.
\end{proof}

\subsection{Estimates with the resonator} The proofs of the following results are similar to the case $\sigma=\hh$. According the notation in \cite{BS} we write $\Phi(t)=e^{-t^2/2}$. Then $\widehat{\Phi}(t)=\sqrt{2\pi}\,\Phi(2\pi t)$.

\begin{lemma} \label{20_4_3:05am}
We have
$$
\int_{-\infty}^{\infty}|R(t)|^{2}\,\Phi\bigg(\dfrac{t}{T}\bigg)\,\dt \ll T\displaystyle\sum_{l\in\mathcal{M}}f(l)^2.
$$
\end{lemma}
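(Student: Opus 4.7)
The plan is to expand $|R(t)|^2$ as a double sum, use the Fourier transform identity for the Gaussian $\Phi$ to collapse the $t$-integral into a bilinear form indexed by $\mathcal{M}'\times\mathcal{M}'$, and then split off the diagonal.

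First I would write
\[
|R(t)|^2 \;=\; \sum_{m,n\in\mathcal{M}'} r(m)\,r(n)\,(m/n)^{-it},
\]
interchange summation and integration (the sum is finite), and use $\widehat{\Phi}(\xi)=\sqrt{2\pi}\,\Phi(2\pi\xi)$ to compute
\[
\int_{-\infty}^{\infty} (m/n)^{-it}\,\Phi(t/T)\,\dt \;=\; T\sqrt{2\pi}\,\Phi\!\big(T\log(m/n)\big).
\]
Thus the integral equals $T\sqrt{2\pi}\sum_{m,n\in\mathcal{M}'} r(m)\,r(n)\,\Phi\!\big(T\log(m/n)\big)$.

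For the diagonal $m=n$, I get $T\sqrt{2\pi}\sum_{m\in\mathcal{M}'} r(m)^2$, which by Proposition \ref{19_4_6:04pm}(ii) is already $\ll T\sum_{l\in\mathcal{M}} f(l)^2$. For the off-diagonal contribution the crucial point is the geometric separation built into $\mathcal{M}'$: enumerating $\mathcal{M}'=\{m_j\}_{j\in\mathcal{J}}$ with $m_j\in[(1+T^{-1})^j,(1+T^{-1})^{j+1})$, for $j<k$ with $k-j\geq 2$ one has
\[
\log(m_k/m_j)\ \geq\ (k-j-1)\log(1+T^{-1})\ \geq\ \dfrac{k-j-1}{2T},
\]
and hence $\Phi\!\big(T\log(m_k/m_j)\big)\leq \exp\!\big(-(k-j-1)^2/8\big)$.

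To finish, I would apply $r(m_j)r(m_k)\leq \tfrac12(r(m_j)^2+r(m_k)^2)$ and symmetry to bound the off-diagonal sum by
\[
\sum_{j\in\mathcal{J}} r(m_j)^2\,\sum_{k\neq j}\Phi\!\big(T\log(m_j/m_k)\big).
\]
Estimating $\Phi\leq 1$ for the two neighbours $k=j\pm 1$ and using the Gaussian tail bound above for $|k-j|\geq 2$ makes the inner sum $O(1)$ uniformly in $j$, so the off-diagonal total is again $\ll T\sum_{l\in\mathcal{M}} f(l)^2$ by Proposition \ref{19_4_6:04pm}(ii). The only mildly delicate step is the spacing estimate, but it follows immediately from the construction of $\mathcal{M}'$; the rest is routine Gaussian decay. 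This is essentially the same argument used at $\sigma=\tfrac12$ in \cite{BS}.
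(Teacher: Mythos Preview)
Your proposal is correct and follows exactly the approach of \cite[Lemma 5]{BS}, which is precisely what the paper invokes (the paper omits all details and simply refers to that lemma). The expansion of $|R(t)|^2$, the Gaussian Fourier computation, the diagonal/off-diagonal split, and the use of the $(1+T^{-1})^j$-spacing of $\mathcal{M}'$ together with Proposition \ref{19_4_6:04pm}(ii) are all as in the original argument.
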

\begin{proof} The proof is similar to \cite[Lemma 5]{BS} and we omit the details.
\end{proof}

\smallskip

\begin{lemma} \label{20_4_6:13pm}
There exists a positive constant $c>0$ such that if
$$
G(t):=\displaystyle\sum_{m=2}^{\infty}\dfrac{\Lambda(m)\,a_m}{m^{\sigma+it}\log m}
$$
is absolutely convergent and $a_m\geq 0$ for every $m\geq 2$, then
$$
\int_{-\infty}^{\infty}G(t)|R(t)|^{2}\,\Phi\bigg(\dfrac{t}{T}\bigg)\,\dt \geq c \,T\,\dfrac{(\log T)^{1-\sigma}(\log_3T)^{\sigma}}{ (\log_2T)^{\sigma}}\bigg(\min_{p\in P}a_p\bigg)\displaystyle\sum_{l\in\mathcal{M}}f(l)^2.
$$
\end{lemma}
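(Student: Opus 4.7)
The strategy is to expand the integrand as a Dirichlet series, interchange summation and integration, and then exploit the nonnegativity of the resulting sum to isolate a family of near-diagonal terms matching the quantity appearing in Lemma~\ref{10_5_1:57am}.

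Writing $|R(t)|^2=\sum_{m,n\in\mathcal{M}'}r(m)r(n)(n/m)^{it}$ and inserting the Dirichlet series for $G(t)$, the exchange of summation and integration (justified by absolute convergence of $G$ together with the Gaussian decay of $\Phi(t/T)$), combined with the identity $\int_{-\infty}^{\infty}e^{-it\log x}\,\Phi(t/T)\,\dt=T\,\widehat{\Phi}(T\log x/(2\pi))$, yields
\[
\int_{-\infty}^{\infty} G(t)|R(t)|^{2}\,\Phi(t/T)\,\dt=T\sum_{k\geq 2}\sum_{m,n\in\mathcal{M}'}\frac{\Lambda(k)\,a_k\,r(m)\,r(n)}{k^{\sigma}\log k}\,\widehat{\Phi}\!\left(\frac{T\log(mk/n)}{2\pi}\right).
\]
Since $\widehat{\Phi}(u)=\sqrt{2\pi}\,e^{-2\pi^{2}u^{2}}>0$ and the remaining factors are all nonnegative by hypothesis and by the construction of the resonator, the right-hand side is a sum of nonnegative terms; hence any sub-family yields a valid lower bound.

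I would then restrict to $k=p$ prime with $p\in P$, and to near-diagonal pairs $(m_i,m_j)\in\mathcal{M}'\times\mathcal{M}'$ determined as follows: for each $l\in\mathcal{M}$ with $p\mid l$, let $m_i,m_j\in\mathcal{M}'$ be the representatives of the multiplicative intervals $[(1+T^{-1})^{i},(1+T^{-1})^{i+1})$ and $[(1+T^{-1})^{j},(1+T^{-1})^{j+1})$ containing $l/p$ and $l$, respectively. Since $m_ip$ and $m_j$ both lie in $[l/(1+T^{-1}),\,l]$, one has $|T\log(m_ip/m_j)|\leq 1$, and hence $\widehat{\Phi}(T\log(m_ip/m_j)/(2\pi))\geq c_0$ for an absolute constant $c_0>0$; moreover, from $r(m_i)^2\geq f(l/p)^2$ and $r(m_j)^2\geq f(l)^2$ we obtain $r(m_i)r(m_j)\geq f(l/p)\,f(l)$. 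Adapting the counting argument in the proof of \cite[Lemma 6]{BS} to our multiplicative $(1+T^{-1})$-intervals, the selected triples $(p,m_i,m_j)$ are parametrized by pairs $(l,p)$ with $l\in\mathcal{M}$, $p\in P$, $p\mid l$, with over-counting controlled by an absolute constant. Summing these contributions, and using that $l$ squarefree with $p\mid l$ implies $f(l/p)f(l)=f(l)^{2}/f(p)$, we get
\[
\int_{-\infty}^{\infty} G(t)|R(t)|^{2}\,\Phi(t/T)\,\dt\,\gtrsim\, T\Big(\min_{p\in P}a_p\Big)\sum_{l\in\mathcal{M}}f(l)^{2}\sum_{p\mid l}\frac{1}{f(p)\,p^{\sigma}},
\]
where the restriction $p\in P$ in the inner sum has been dropped using that $l\in\mathcal{M}\subset\supp(f)$ forces every prime divisor of $l$ to lie in $P$. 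Applying Lemma~\ref{10_5_1:57am} together with the observation that $N=[T^{\kappa}]$ (with $\kappa=(1-\beta)/2$ fixed) gives $\log N\asymp\log T$, $\log_2 N\asymp\log_2 T$, $\log_3 N\asymp\log_3 T$, closes the argument.

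The main obstacle is the combinatorial step in the middle, namely verifying that the family of near-diagonal triples $(p,m_i,m_j)$ can be faithfully parametrized by pairs $(l,p)$ with only bounded over-counting. This is where one genuinely uses the multiplicative $(1+T^{-1})$-structure of $\mathcal{M}'$ and the explicit thickening $B_j$ used to define $r(m_j)$, and it is where the interval-counting analysis of \cite{BS} must be imported and carefully adapted to the present setting.
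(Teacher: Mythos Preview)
Your proposal is correct and follows essentially the same approach as the paper: the paper's proof consists entirely of the sentence ``The proof follows the same outline of \cite[Lemma~7]{BS}, replacing \cite[Lemma~4]{BS} by Lemma~\ref{10_5_1:57am},'' and your write-up is precisely an expansion of that outline, with the correct substitution of Lemma~\ref{10_5_1:57am} and the correct translation $N=[T^{\kappa}]$ at the end. Your identification of the near-diagonal combinatorial step as the only nontrivial point, and your deferral to the interval-counting argument in \cite{BS} for it, mirrors exactly what the paper does.
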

\begin{proof} The proof follows the same outline of \cite[Lemma 7]{BS}, replacing \cite[Lemma 4]{BS} by Lemma \ref{10_5_1:57am}. We omit the details.
\end{proof}

\medskip

\section{Proof of Theorem \ref{23_4_8:21pm}}
Assume the Riemann hypothesis. We consider the parameters defined in subsection $3.1$. 

\subsection{The case $n\equiv1 \mod 2$}
We consider the entire function 
$$
K_n(z)=(-1)^{\frac{n-1}{2}}\log_2T\,\Phi(2\pi\log_2T\,z)
$$
which has Fourier transform
\begin{align}  \label{20_4_5:37pm}
\widehat{K_n}(\xi)=\dfrac{(-1)^{\frac{n-1}{2}}}{\sqrt{2\pi}}\Phi\bigg(\dfrac{\xi}{\log_2T}\bigg) \ll 1.
\end{align}
Firstly we need to estimate the following integral 
\begin{align} \label{23_4_8:30pm}
\int_{-\infty}^{\infty}\bigg(\int_{-\infty}^{\infty} S_n(\sigma,t+u)\,K_n(u)\du\bigg) |R(t)|^2\Phi\bigg(\dfrac{t}{T}\bigg)\dt.
\end{align}
This follows by the same computations as in \cite[Section 5]{BS}. We will divide \eqref{23_4_8:30pm} into $3$ integrals.
 
\smallskip

 \noindent 1. {\it First integral}: Using \eqref{8_5_8:45pm}, \eqref{15_5_4:28pm} and Fubini's theorem we get
\begin{align*} 
\int_{-T^{\beta}}^{T^{\beta}}&\int_{-\infty}^{\infty}|S_n(\sigma,t+u)\,K_n(u)|\du\,\dt  \\
&  = \int_{-T^{\beta}}^{T^{\beta}}\int_{|u|\leq T^{\beta}}|S_n(\sigma,t+u)\,K_n(u)|\du\,\dt + \int_{-T^{\beta}}^{T^{\beta}}\int_{|u|>T^\beta}|S_n(\sigma,t+u)\,K_n(u)|\du\,\dt  \\
& \ll _n\int_{-T^{\beta}}^{T^{\beta}}\int_{-2T^{\beta}}^{{2T^{\beta}}}|S_n(\sigma,u)\,K_n(u-t)|\du\,\dt + \int_{-T^{\beta}}^{T^{\beta}}\int_{|u|>T^\beta}\log(2|u|+2)|K_n(u)|\du\,\dt  \\
& \ll _n\int_{-2T^{\beta}}^{2T^{\beta}}|S_n(\sigma,u)|\,\du + T^{\beta} \ll_n T^{\beta}\log T.
\end{align*}
Hence, by Proposition \ref{19_4_6:04pm} we obtain
\begin{align} \label{19_4_6:13pm}
\int_{-T^{\beta}}^{T^{\beta}}\bigg(\int_{-\infty}^{\infty}|S_n(\sigma,t+u)\,K_n(u)|\du\bigg)\,|R(t)|^2\Phi\bigg(\dfrac{t}{T}\bigg)\dt  \ll_n T^{\beta}\log T \,R(0)^2 \ll_n T^{\beta+\kappa}\log T\displaystyle\sum_{l\in\mathcal{M}}f(l)^2.
\end{align}

 \noindent 2. {\it Second integral}: Using the fast decay of $\Phi(t)$, \eqref{8_5_8:45pm} and Proposition \ref{19_4_6:04pm}, it follows that
\begin{align} \label{19_4_6:17pm}
\begin{split}
\int_{|t|>T\log T} &\bigg(\int_{-\infty}^{\infty}|S_n(\sigma,t+u)\,K_n(u)|\du\bigg)\,|R(t)|^2\Phi\bigg(\dfrac{t}{T}\bigg)\dt   \\ 
&\ll T^{\kappa}e^{-\frac{(\log T)^2}{4}}\Bigg(\int_{|t|>T\log T}\int_{-\infty}^{\infty}|S_n(\sigma,t+u)\,K_n(u)|\du\,\Phi\bigg(\dfrac{t}{2T}\bigg)\dt \Bigg)\,\displaystyle\sum_{l\in\mathcal{M}}f(l)^2 \\
& = o(1)\displaystyle\sum_{l\in\mathcal{M}}f(l)^2.
\end{split}
\end{align}

\noindent 3. {\it Third integral}: 
\begin{align} \label{20_4_2:53am}
\begin{split}
& \int_{T^{\beta}\leq |t|\leq T\log T}\bigg(\int_{-\infty}^{\infty} S_n(\sigma,t+u)\,K_n(u)\du\bigg) \,|R(t)|^2\Phi\bigg(\dfrac{t}{T}\bigg)\dt \\
& \, \, \, \, \, \, \, \, \, \, \, \, \, \,  = \int_{T^{\beta}\leq |t|\leq T\log T}\bigg(\int_{\frac{T^{\beta}}{2}\leq |t+u|\leq 2T\log T} S_n(\sigma,t+u)\,K_n(u)\du\bigg)\,|R(t)|^2\Phi\bigg(\dfrac{t}{T}\bigg)\dt   \\
& \, \, \, \, \, \, \, \, \, \, \, \, \, \, \, \, \, \, \, + \int_{T^{\beta}\leq |t|\leq T\log T}\bigg(\int_{\{|u+t|<\frac{T^\beta}{2}\}\cup\{|u+t|>2T\log T\}} S_n(\sigma,t+u)\,K_n(u)\du\bigg)\,|R(t)|^2\Phi\bigg(\dfrac{t}{T}\bigg)\dt.
\end{split}
\end{align}
Now using \eqref{8_5_8:45pm} and Lemma \ref{20_4_3:05am},  the last integral can be bounded by
\begin{align} \label{20_4_3:13am}
\begin{split}
\int_{T^{\beta}\leq |t|\leq T\log T} & \int_{\{|u+t|<\frac{T^\beta}{2}\}\cup\{|u+t|>2T\log T\}} |S_n(\sigma,t+u)\,K_n(u)|\du \,|R(t)|^2\Phi\bigg(\dfrac{t}{T}\bigg)\dt  \\
& \ll \int_{T^{\beta}\leq |t|\leq T\log T}\int_{\{|u|<\frac{T^\beta}{2}\}\cup\{|u|>2T\log T\}}|S_n(\sigma,u)\,K_n(u-t)|\du \,|R(t)|^2\Phi\bigg(\dfrac{t}{T}\bigg)\dt \\
& \leq \int_{T^{\beta}\leq |t|\leq T\log T}\int_{\{|u|<\frac{T^\beta}{2}\}\cup\{|u|>2T\log T\}} \Big|S_n(\sigma,u)\,K_n\Big(\frac{u}{2}\Big)\Big|\du \,|R(t)|^2\Phi\bigg(\dfrac{t}{T}\bigg)\dt   \\
& \ll_n \int_{T^{\beta}\leq |t|\leq T\log T}|R(t)|^2\Phi\bigg(\dfrac{t}{T}\bigg)\dt \ll T\displaystyle\sum_{l\in\mathcal{M}}f(l)^2.
\end{split}
\end{align}
Inserting \eqref{20_4_3:13am} in \eqref{20_4_2:53am} we obtain that 
\begin{align} \label{20_4_3:19am}
\begin{split}
&\int_{T^{\beta}\leq |t|\leq T\log T}\bigg(\int_{-\infty}^{\infty}S_n(\sigma,t+u)\,K_n(u)\du\bigg)\,|R(t)|^2\Phi\bigg(\dfrac{t}{T}\bigg)\dt \\
& \, \, \, \, \, \, = \int_{T^{\beta}\leq |t|\leq T\log T}\Bigg(\int_{\frac{T^{\beta}}{2}\leq |t+u|\leq 2T\log T} S_n(\sigma,t+u)\,K_n(u)\du\Bigg) \,|R(t)|^2\Phi\bigg(\dfrac{t}{T}\bigg)\dt +  O_n(T)\displaystyle\sum_{l\in\mathcal{M}}f(l)^2.
\end{split}
\end{align}
Therefore, combining \eqref{19_4_6:13pm}, \eqref{19_4_6:17pm} and \eqref{20_4_3:19am} we have that the integral in  \eqref{23_4_8:30pm} can be written as
\begin{align}  \label{15_5_8:00pm}
\begin{split}
\int_{-\infty}^{\infty} \bigg(\int_{-\infty}^{\infty} S_n(\sigma,&\, t+u) \,K_n(u)\du \bigg)\,|R(t)|^2\Phi\bigg(\dfrac{t}{T}\bigg)\dt +  O_n(T)\displaystyle\sum_{l\in\mathcal{M}}f(l)^2 \\
& = \int_{T^{\beta}\leq |t|\leq T\log T}\Bigg(\int_{\frac{T^{\beta}}{2}\leq |t+u|\leq 2T\log T} S_n(\sigma,t+u)\,K_n(u)\du\Bigg) \,|R(t)|^2\Phi\bigg(\dfrac{t}{T}\bigg)\dt.
  \end{split}
\end{align}

\smallskip

Now we consider two subcases: 
\subsubsection{The subcase n\,$\equiv 1\mod 4$}. In this case note that $K_n(u)\geq 0$ for all $u\in\R$. Then by Lemma \ref{20_4_3:05am} and the fact that $S_n(\sigma, t)$ is an even function we obtain in \eqref{15_5_8:00pm} 
\begin{align} \label{20_4_3:43am}
\begin{split}
\int_{-\infty}^{\infty} \bigg(\int_{-\infty}^{\infty} S_n(\sigma, t+u)\,K_n(u)\du\bigg)& \,|R(t)|^2\Phi\bigg(\dfrac{t}{T}\bigg)\dt +  O_n(T)\displaystyle\sum_{l\in\mathcal{M}}f(l)^2 \\
 & \, \, \, \, \, \,  \, \, \, \, \, \, \, \, \, \leq b\,T\Bigg(\max_{\frac{T^{\beta}}{2}\leq t\leq 2T\log T}S_n(\sigma,t)\Bigg)\displaystyle\sum_{l\in\mathcal{M}}f(l)^2,
\end{split}
\end{align}
for some constant $b>0$. We define
\begin{align} \label{23_4_8:31pm}
G_{n}(t)=\displaystyle\sum_{m=2}^{\infty}\dfrac{\Lambda(m)}{\pi \,m^{\sigma+it}(\log m)^{n+1}}\widehat{K_n}\bigg(\dfrac{\log m}{2\pi}\bigg).
\end{align}
By Proposition \ref{20_4_3:44am} and \eqref{20_4_5:37pm} observe that
$$
\int_{-\infty}^{\infty}S_{n}(\sigma,t+u)\,K_n(u)\du= \re G_{n}(t)+O_n\big(V_{\frac{1}{2}}(t)+1\big),
$$
for $t\neq 0$. Therefore, the integral on the left-hand side of  \eqref{20_4_3:43am} takes the form
\begin{align} \label{20_4_5:59pm}
\begin{split}
\int_{-\infty}^{\infty}\bigg(\int_{-\infty}^{\infty} & S_n(\sigma,t+u)\,K_n(u)\du\bigg)\,|R(t)|^2\Phi\bigg(\dfrac{t}{T}\bigg)\dt  \\
 & = \re\int_{-\infty}^{\infty} G_{n}(t)|R(t)|^2\Phi\bigg(\dfrac{t}{T}\bigg)\dt + O_n\bigg(\int_{-\infty}^{\infty} \big(V_{\frac{1}{2}}(t)+1\big)|R(t)|^2\Phi\bigg(\dfrac{t}{T}\bigg)\dt\bigg).
\end{split}
\end{align}
Using Proposition \ref{19_4_6:04pm}, Lemma \ref{20_4_3:05am} and the definition of $V_{\frac{1}{2}}(t)$ we get
\begin{align} \label{23_4_8:33pm}
\int_{-\infty}^{\infty}\big(V_{\frac{1}{2}}(t)+1\big)|R(t)|^2\Phi\bigg(\dfrac{t}{T}\bigg)\dt \ll T\displaystyle\sum_{l\in\mathcal{M}}f(l)^2.
\end{align}
Therefore using \eqref{20_4_5:59pm} and \eqref{23_4_8:33pm} we have
\begin{align} \label{20_4_7:11pm}
b\,T\Bigg(\max_{\frac{T^{\beta}}{2}\leq t\leq 2T\log T}S_n(\sigma,t)\Bigg)\displaystyle\sum_{l\in\mathcal{M}}f(l)^2\geq \re\int_{-\infty}^{\infty} G_{n}(t)|R(t)|^2\Phi\bigg(\dfrac{t}{T}\bigg)\dt +  O_n(T)\displaystyle\sum_{l\in\mathcal{M}}f(l)^2.
\end{align}
Now using Lemma \ref{20_4_6:13pm} (note that $\widehat{K_n}(t)$ is a positive real function) with 
$$a_m=\widehat{K_n}\bigg(\frac{\log m}{2\pi}\bigg)\frac{1}{\pi(\log m)^n},$$ for all $m\geq 2$ we obtain that
\begin{align} \label{20_4_7:10pm33}
\re\int_{-\infty}^{\infty} G_n(t)|R(t)|^2\Phi\bigg(\dfrac{t}{T}\bigg)\dt & \geq  c \,T\dfrac{(\log T)^{1-\sigma}(\log_3T)^{\sigma}}{ (\log_2T)^{\sigma}}\bigg(\min_{p\in P}\widehat{K_n}\bigg(\frac{\log p}{2\pi}\bigg)\frac{1}{(\log p)^n}\bigg)\displaystyle\sum_{l\in\mathcal{M}}f(l)^2,
\end{align}
for some constant $c>0$. Note that \eqref{20_4_6:46pm} and \eqref{20_4_5:37pm} imply \begin{align*} 
\min_{e\log N\log_{2}N < p \leq \exp\big((\log_2N)^{1/8}\big)\log N\log_2N}\widehat{K_n}\bigg(\frac{\log p}{2\pi}\bigg)\frac{1}{(\log p)^n} \gg \dfrac{1}{(\log_2 T)^n}.
\end{align*}
Inserting this in \eqref{20_4_7:10pm33}, we obtain in \eqref{20_4_7:11pm} that (after simplification) 
$$
\max_{\frac{T^{\beta}}{2}\leq t\leq 2T\log T}S_n(\sigma,t) \geq c_n\,\dfrac{(\log T)^{1-\sigma}(\log_3T)^{\sigma}}{ (\log_2T)^{\sigma+n}} + O_n(1),
$$
for some constant $c_n>0$. After a trivial adjustment, changing $T$ to $T/2\log T$ and making $\beta$ slightly smaller, we obtain the restriction $T^\beta\leq t \leq T$.

\subsubsection{The subcase n\,$\equiv 3\mod 4$} In this case note that $K_n(u)\leq 0$ for all $u\in\R$. Similar to \eqref{20_4_3:43am}, using the fact that $S_n(t)$ is an even function we find that 
\begin{align} \label{20_4_3:43am55}
\begin{split}
\int_{-\infty}^{\infty} \bigg(\int_{-\infty}^{\infty} S_n(\sigma,t+u)\,K_n(u)\du\bigg)& \,|R(t)|^2\Phi\bigg(\dfrac{t}{T}\bigg)\dt +  O_n(T)\displaystyle\sum_{l\in\mathcal{M}}f(l)^2 \\
 & \, \, \, \, \, \,  \, \, \, \, \, \, \, \, \, \leq b\,T\Bigg(\max_{\frac{T^{\beta}}{2}\leq t\leq 2T\log T}|S_n(\sigma,t)|\Bigg)\displaystyle\sum_{l\in\mathcal{M}}f(l)^2,
\end{split}
\end{align}
for some constant $b>0$. Using the function $G_n$ defined in \eqref{23_4_8:31pm}, by Proposition \ref{20_4_3:44am} and \eqref{20_4_5:37pm} we get
$$
\int_{-\infty}^{\infty}S_{n}(\sigma,t+u)\,K_n(u)\du= -\re G_{n}(t)+O_n\big(V_{\frac{1}{2}}(t)+1\big).
$$
A similar analysis as in the previous case shows that, by Lemma \ref{20_4_6:13pm} (note that $-\widehat{K_n}(t)$ is a positive real function)
\begin{align} \label{20_4_7:10pm}
\re\int_{-\infty}^{\infty} -G_{n}(t)|R(t)|^2\Phi\bigg(\dfrac{t}{T}\bigg)\dt & \geq  c \,T\dfrac{(\log T)^{1-\sigma}(\log_3T)^{\sigma}}{ (\log_2T)^{\sigma}}\bigg(\min_{p\in P}-\widehat{K_n}\bigg(\frac{\log p}{2\pi}\bigg)\frac{1}{(\log p)^n}\bigg)\displaystyle\sum_{l\in\mathcal{M}}f(l)^2,
\end{align}
for some constant $c>0$. By \eqref{20_4_6:46pm} and \eqref{20_4_5:37pm} we have
 \begin{align*} 
\min_{e\log N\log_{2}N < p \leq \exp\big((\log_2N)^{1/8}\big)\log N\log_2N}-\widehat{K_n}\bigg(\frac{\log p}{2\pi}\bigg)\frac{1}{(\log p)^n} \gg \dfrac{1}{(\log_2 T)^n}.
\end{align*}
Inserting this in \eqref{20_4_7:10pm} we obtain in \eqref{20_4_3:43am55} that (after simplification) 
$$
\max_{\frac{T^{\beta}}{2}\leq t\leq 2T\log T}|S_n(\sigma,t)| \geq c_n\,\dfrac{(\log T)^{1-\sigma}(\log_3T)^{\sigma}}{ (\log_2T)^{\sigma+n}} + O_n(1),
$$
for some constant $c_n>0$. After the same trivial adjustment of $T$ and $\beta$ as in the preceding case we obtain the desired result.

\subsection{The case $n\equiv 0 \mod 2$}
We consider the entire function 
$$
K_n(z)=(-1)^{\frac{n}{2}+1}(\log_2T)^2\,z\,\Phi(2\pi\log_2T\,z)
$$
which has Fourier transform
\begin{align}  \label{20_4_5:37pm2}
\widehat{K_n}(\xi)=\dfrac{(-1)^{\frac{n}{2}}\,i}{(2\pi)^{\frac{3}{2}}(\log_2 T)}\xi\,\Phi\bigg(\dfrac{\xi}{\log_2T}\bigg) \ll 1.
\end{align}
The analysis in this case is similar to the case $n\equiv 3 \mod4$. Using the fact that $S_n(t)$ is an odd function we obtain that \eqref{20_4_3:43am55} holds. Using the function $G_{n}$ defined in \eqref{23_4_8:31pm}, by Proposition \ref{20_4_3:44am} and \eqref{20_4_5:37pm2} note that
$$
\int_{-\infty}^{\infty}S_{n}(\sigma,t+u)\,K_n(u)\du=(-1)^{\frac{n}{2}}\im G_{n}(t)+O_n\big(V_{\frac{1}{2}}(t)+1\big).
$$
This implies that in \eqref{20_4_3:43am55} we obtain 
\begin{align*} 
b\,T\Bigg(\max_{\frac{T^{\beta}}{2}\leq t\leq 2T\log T}\big|S_n(\sigma,t)\big|\Bigg)\displaystyle\sum_{l\in\mathcal{M}}f(l)^2\geq \re \int_{-\infty}^{\infty} (-1)^{\frac{n}{2}+1}\,i\,G_{n}(t)|R(t)|^2\Phi\bigg(\dfrac{t}{T}\bigg)\dt +  O_n(T)\displaystyle\sum_{l\in\mathcal{M}}f(l)^2  ,
\end{align*}
for some constant $b>0$. Now, using Lemma \ref{20_4_6:13pm} (note that $i(-1)^{\frac{n}{2}+1}\widehat{K_n}(t)$ is a positive real function for $t\geq 0$) it follows that
\begin{align} \label{20_4_7:10pm233}
\begin{split}
 T\Bigg(\max_{\frac{T^{\beta}}{2}\leq t\leq 2T\log T}&\big|S_n(\sigma,t)\big|\Bigg)\displaystyle\sum_{l\in\mathcal{M}}f(l)^2  \\
& \geq c \,T\dfrac{(\log T)^{1-\sigma}(\log_3T)^{\sigma}}{ (\log_2T)^{\sigma}}\bigg(\min_{p\in P}\im\bigg\{(-1)^{\frac{n}{2}}\widehat{K_n}\bigg(\frac{\log p}{2\pi}\bigg)\frac{1}{(\log p)^n}\bigg)\bigg\}\displaystyle\sum_{l\in\mathcal{M}}f(l)^2,
\end{split}
\end{align}
for some constant $c>0$. By \eqref{20_4_6:46pm} and \eqref{20_4_5:37pm2} we have
 \begin{align*} 
\min_{e\log N\log_{2}N < p \leq \exp\big((\log_2N)^{1/8}\big)\log N\log_2N}\im\bigg\{(-1)^{\frac{n}{2}}\widehat{K_n}\bigg(\frac{\log p}{2\pi}\bigg)\frac{1}{(\log p)^n}\bigg\} \gg \dfrac{1}{(\log_2 T)^n}.
\end{align*}
Inserting this in \eqref{20_4_7:10pm233} and doing the same procedure as in the previous cases we obtain the desired result.

\medskip

\section*{Acknowledgements}
I would like to thank Emanuel Carneiro for all the motivation and insightful conversations
on this subject. The author also acknowledges support
from FAPERJ-Brazil.

\medskip

\end{document}